\def\negthickspace{\!\!\!}
\newcommand{\nicefrac}[2]
{\leavevmode \kern.1em\raise.5ex\hbox{\the\scriptfont0 #1}
             \kern-.1em/\kern-.15em\lower.25ex
             \hbox{\the\scriptfont0 #2}}
\newtheorem*{theorem}{Theorem}
\newtheorem*{proposition}{Proposition}
\newtheorem*{corollary}{Corollary}
\newtheorem*{definition}{Definition}
\newtheorem*{lemma}{Lemma}
\theoremstyle{definition}
\newtheorem*{remark}{Remark}
\newtheorem*{remarks}{Remarks}
\theoremstyle{definition}
\begin{document} 

\begin{center}
{\Large{\sc On critical normal sections}}\\[0.2cm]
{\Large{\sc for two-dimensional immersions in $\mathbb R^{n+2}$}}\\[1cm]
{\large Steffen Fr\"ohlich,\quad Frank M\"uller}\\[0.4cm]
{\small\bf Abstract}\\[0.4cm]
\begin{minipage}[c][2.5cm][l]{12cm}
{\small We study orthonormal normal sections of two-dimensional immersions in $\mathbb R^{n+2},$ $n\ge 2$, at which these sections are critical for a functional of total torsion. In particular, we establish upper bounds for the torsion coefficients in the case of non-flat normal bundles. With these notes we continue a foregoing paper on surfaces in $\mathbb R^4.$}
\end{minipage}
\end{center}
{\small MCS 2000: 53A07, 35J60, 49K20}\\
{\small Keywords: Two-dimensional immersions, higher codimension, normal bundle}


\section{Introduction}
\label{section1}
In the present paper we investigate orthonormal sections of normal bundles of two-dimensional immersions in Euclidian space $\mathbb R^{n+2}$, $n\ge 2,$ which are critical for the \emph{functional of total torsion}
\begin{equation*}
  {\mathcal T}_X({\mathcal N})
  =\sum_{\sigma,\vartheta=1}^n
   \int\hspace{-0.25cm}\int\limits_{\hspace{-0.3cm}B}
   h^{ij}T_{\sigma,i}^\vartheta T_{\sigma,j}^\vartheta\,W\,dudv;
\end{equation*}
see section \ref{section3} for the precise definition.\\[1ex]
Continuing the paper \cite{Froehlich_Mueller_01} on $2$-surfaces in $\mathbb R^4,$ we consider both, flat and non-flat normal bundles:
\begin{itemize}
\item[-]
For the flat case we prove that all torsion coefficients $T_{\sigma,i}^\vartheta$ of a critical orthonormal normal section have to vanish.
\vspace*{-1ex}
\item[-]
For the general case of normal bundles with non-vanishing curvature we derive estimates for the total torsion and the torsion coefficients of such sections.
\end{itemize}
The main difference to the case of immersions in $\mathbb R^4$ (that means $n=2$) is the nonlinear character of the pertinent differential equations appearing for $n\ge3$. Consequently, completely new arguments have to be applied. For instance, for $n=3$, all our results are based on the fact that a certain integral function of the torsion coefficients solves an (inhomogeneous) $H$-surface system; see section \ref{section4} for details.\\[1ex]
The outline of this paper is as follows:
\begin{itemize}
\item
In the next section we introduce the basic definitions of $2$-immersions $X$ in Euclidean spaces $\mathbb R^{n+2}$ for natural $n\ge 1.$
\vspace*{-1ex}
\item
In section 3 we introduce the functional of total torsion ${\mathcal T}_X({\mathcal N})$ and explain its significance for possible applications. Then, we compute the first variation of $\mathcal T_X(\mathcal N)$ w.r.t. $SO(n)$-perturbations as well as the associated Euler-Lagrange system. 
\vspace*{-1ex}
\item
Interpreting the Euler-Lagrange equations as integrability conditions, we derive a system of second order with quadratic growth in the gradient in section 4.
\vspace*{-1ex}
\item
In section 5 we prove that the torsion coefficients of critical orthonormal normal sections vanish identically, whenever the normal bundle is flat.
\vspace*{-1ex}
\item
Finally, in section 6 the reader finds a lower bound for the total torsion as well as an $L^\infty$-estimate for the torsion coefficients of critical normal sections, both for the general case of non-flat normal bundles in higher codimension.
\end{itemize}
\section{Basic settings and definition of the torsion}
\label{section2}
\setcounter{equation}{0}
\subsection{Two-dimensional immersions}
Let $n\ge 1$ be a natural number. We consider two-dimensional immersions
\begin{equation*}
  X=X(u,v)=\big(x^1(u,v),\ldots,x^{n+2}(u,v)\big)\in C^4(B,\mathbb R^{n+2}\,)
\end{equation*}
in the Euclidean space $\mathbb R^{n+2},$ parametrized on the closed unit disc
\begin{equation*}
  B=\Big\{(u,v)\in\mathbb R^2\,:\,u^2+v^2\le 1\Big\}\subset\mathbb R^2\,,
\end{equation*}
and such that the regularity condition
\begin{equation}\label{2.3}
  \mbox{rank}
  \begin{pmatrix} X_u(u,v)\\ X_v(u,v) \end{pmatrix}
  =2\quad\mbox{in}\ B
\end{equation}
is satisfied. Furthermore, we write $\mathring{B}=\big\{(u,v)\in\mathbb R^2\,:\,u^2+v^2<1\big\}$ for the open unit disc, its boundary is denoted by $\partial B=\big\{(u,v)\in\mathbb R^2\,:\,u^2+v^2=1\big\},$ and finally we set $B_\varrho(w_0):=\{w\in\mathbb R^2\,:\,|w-w_0|\le\varrho^2\}.$
\subsection{Conformal parametrization}
Let our immersions $X$ be conformally parametrized: That is, writing $X_{u^i}$ for the partial derivative of $X$ w.r.t. $u^i$ (where $u^1\equiv u$ and $u^2\equiv v$), there hold the \emph{conformality relations}
\begin{equation}\label{2.4}
  X_{u^i}\cdot X_{u^j}^t=:h_{ij}=W\delta_{ij}\quad\mbox{in}\ B
\end{equation}
for $i,j=1,2$. Here we used the area element
\begin{equation*}
  W:=\sqrt{h_{11}h_{22}-h_{12}^2}\,,
\end{equation*}
(note that $W>0$ in $B$ due to (\ref{2.3})) and the Kronecker symbol
\begin{equation*}
  \delta_{ij}
  :=\left\{
      \begin{array}{l}
        1\quad\mbox{if}\ i=j \\[0.1cm]
        0\quad\mbox{if}\ i\not=j
      \end{array}
    \right.
  \quad\mbox{for}\ i,j=1,2,
\end{equation*}
and, finally, $Z^t\in\mathbb R^d$ means the transposed vector of any $Z\in\mathbb R^d,$ $d\in\mathbb N$. As is well known, there is no restriction in assuming $X$ to be conformally parametrized,  see e.g. \cite{Sauvigny_01}. 
\subsection{Normal sections, torsion coefficients, and curvature \\ of the normal bundle}
Let ${\mathcal N}:=\big\{N_1,\ldots,N_n\big\}$ form an orthonormal section of the normal bundle of the immersion $X$ with the following properties:
\begin{equation*}
  N_\sigma\in C^3(B,\mathbb R^{n+2}\,),\quad
  X_{u^j}\cdot N_\sigma^t=0,\quad
  N_\sigma\cdot N_\vartheta^t=\delta_{\sigma\vartheta}
  \quad\mbox{in}\ B
\end{equation*}
for all $i=1,2$ and all $\sigma,\vartheta=1,\ldots,n.$
\goodbreak\noindent
To such a section ${\mathcal N}$ we associate the so-called torsion coefficients in the following sense:
\begin{definition}
The torsion coefficients of an orthonormal normal section ${\mathcal N}$ (shortly: ONS ${\mathcal N}$) are defined as
\begin{equation*}
  T_{\sigma,i}^\vartheta:=N_{\sigma,u^i}\cdot N_\vartheta^t\,,\quad i=1,2,\ \sigma,\vartheta=1,\ldots,n.
\end{equation*}
\end{definition}
\begin{remarks}\quad
\begin{itemize}
\item[1.]
Note the skew-symmetry of the torsion coefficients: 
\begin{equation*}
  T_{\sigma,i}^\vartheta=-T_{\vartheta,i}^\sigma
  \quad\mbox{for all}\ i=1,2,\ \sigma,\vartheta=1,\ldots,n.
\end{equation*}
\item[2.]
The $T_{\sigma,i}^\vartheta$ are exactly the coefficients of the normal connection (see e.g. \cite{doCarmo_01}), while our name {\it torsion coefficients} follows the one-dimensional theory of space curves. Namely, if ${\mathfrak n}$ and ${\mathfrak b}$ denote the normal resp. the binormal of an arc-length parametrized curve $c=c(s),$ then its torsion is defined as the inner product ${\mathfrak n}(s)'\cdot{\mathfrak b}(s)^t.$
\end{itemize}
\end{remarks}
\noindent
The coefficients $S_{\sigma,ij}^\vartheta\in C^1(B,\mathbb R)$ of the curvature tensor ${\mathfrak S}$ of the normal bundle are given by 
\begin{equation}\label{2.10}
  S_{\sigma,ij}^\vartheta
  :=T_{\sigma,i,u^j}^\vartheta-T_{\sigma,j,u^i}^\vartheta
   +T_{\sigma,i}^\omega T_{\omega,j}^\vartheta-T_{\sigma,j}^\omega T_{\omega,i}^\vartheta\,,\quad
  i,j=1,2,\ \sigma,\vartheta=1,\ldots,n
\end{equation}
(see again \cite{doCarmo_01}; summation convention for $\omega=1,\ldots,n$). Note that the $S_{\sigma,ij}^\vartheta$ are skew-symmetric in $i,j$ and $\sigma,\vartheta$. Consequently, they are completely described by the $N:=\frac{1}{2}n(n-1)$ quantities
\begin{equation}\label{2.11}
  S_{\sigma,12}^\vartheta\quad
  \mbox{for}\ (\sigma,\vartheta)\in U_n:=\Big\{(\omega,\delta)\in\{1,\ldots,n\}^2\,:\ \omega<\delta\Big\}.
\end{equation}
For example, in $\mathbb R^4$ there is -- up to the sign -- only one relevant quantity $S_{1,12}^2.$
\section{The total torsion and its properties}
\label{section3}
\setcounter{equation}{0}
\subsection{Definition of the total torsion}
In the paper at hand we study ONS ${\mathcal N}$ which are critical for the following {\it functional of total torsion} (summation convention for $i,j=1,2$)
\begin{equation}\label{3.1}
  {\mathcal T}_X({\mathcal N})
  :=\sum_{\sigma,\vartheta=1}^n\,
   \int\hspace*{-0.25cm}\int\limits_{\hspace{-0.3cm}B}
   h^{ij}\,T_{\sigma,i}^\vartheta T_{\sigma,j}^\vartheta\,W\,dudv,
\end{equation}
where the $h^{ij}$ are the elements of the inverse matrix to $(h_{ij})_{i,j=1,2}$ from (\ref{2.4}): 
\begin{equation*}
  h_{ij}h^{jk}=\delta_i^k\quad\mbox{in}\ B\quad \mbox{for}\ i,k=1,2.
\end{equation*}
\begin{remark}
The total torsion ${\mathcal T}_X$ does not depend on the parametrization of $X$, but it depends on the chosen ONS ${\mathcal N}.$
\end{remark}
\noindent
Taking the conformal parametrization (\ref{2.4}) of $X$ into account and using the definition of $U_n$ from (\ref{2.11}), ${\mathcal T}_X$ takes the form
\begin{equation}\label{3.3}
  {\mathcal T}_X({\mathcal N})
  =2\sum_{(\sigma,\vartheta)\in U_n}\,\int\hspace*{-0.25cm}\int\limits_{\hspace{-0.3cm}B}
    \Big\{(T_{\sigma,1}^\vartheta)^2+(T_{\sigma,2}^\vartheta)^2\Big\}\,dudv.
\end{equation}
We want to establish bounds for this functional as well as for the torsions $T_{\sigma,i}^\vartheta$ of critical ONS ${\mathcal N},$ the latter in terms of the value of ${\mathcal T}_X$ itself and an $L^\infty$-bound for $S_{\sigma,12}^\vartheta.$
\subsection{Fields of application}
\begin{itemize}
\item[1.]
First, the total torsion appears in many concrete situations, for example, in the second variation formula of the area functional
\begin{equation*}
  {\mathcal A}[X]
  :=\int\hspace{-0.25cm}\int\limits_{\hspace{-0.3cm}B}\sqrt{h_{11}h_{22}-h_{12}^2}\,dudv.
\end{equation*}
Namely, choose an ONS ${\mathcal N}=\{N_1,\ldots,N_n\},$ and consider a normal variation $\widetilde X=X+\chi N_\omega$ of a conformally parametrized minimal surface $X,$ where $N_\omega\in{\mathcal N}$ and $\chi\in C_0^\infty(B,\mathbb R).$ For the second variation of ${\mathcal A}[X]$ w.r.t.~$N_\omega\in{\mathcal N}$ one then computes
\begin{equation*}
  \delta_{N_\omega}^2{\mathcal A}[X;\chi]
  =\int\hspace{-0.25cm}\int\limits_{\hspace{-0.3cm}B}(|\nabla\chi|^2+2K_{N_\omega}W\chi^2)\,dudv
   +\sum_{\sigma=1}^n
    \int\hspace{-0.25cm}\int\limits_{\hspace{-0.3cm}B}
    \Big\{
      (T_{\omega,1}^\sigma)^2+(T_{\omega,2}^\sigma)^2
    \Big\}\,\chi^2\,dudv
\end{equation*}
with the ``Gaussian curvature'' $K_{N_\omega}$ w.r.t. $N_\omega$ (see e.g. \cite{Froehlich_01}). Therefore, it is desirable to control the torsion coefficients of suitable chosen ONS ${\mathcal N}.$
\item[2.]
Next, taking Ricci's integrability conditions
  $$S_{\sigma,12}^\vartheta=(L_{\sigma,1m}L_{\vartheta,2n}-L_{\sigma,2m}L_{\vartheta,1n})h^{mn},\quad
    L_{\sigma,ij}:=-X_{u^i}\cdot N_{\sigma,u^j}^t=X_{u^iu^j}\cdot N_{\sigma}^t\,,$$
into account (see e.g. \cite{doCarmo_01}), we can bound the curvature of the normal bundle in terms of the Gaussian curvature $K$ and the length of the mean curvature vector ${\mathcal H}$ of $X:$
  $$|S_{\sigma,12}^\vartheta|\le 2\,\big\{|{\mathcal H}|^2-K\big\}\,W.$$
Therefore, to control a geometric quantity in terms of $|S_{\sigma,12}^\vartheta|$ means to control it by means of $|{\mathcal H}|^2-K.$
\item[3.]
Finally, and more generally, the differential geometry of immersions with non-trivial normal bundles is certainly far away from beeing completely developed. This is manifested in the fact that many problems are satisfactorally solved only in the case of vanishing curvature tensor ${\mathfrak S}$ (see e.g. \cite{Ferapontov_01}, \cite{Smoczyk_Wang_Xin_01}), or if one restricts to special geometric situations (see e.g. \cite{Bergner_Froehlich_01} for curvature estimates for {\it graphs}).\\[0.6ex]
With this paper we aim at giving partial answers to questions like these:
\begin{itemize}
\item
What characteristic geometrical quantities of immersions can be controlled in terms of the curvature tensor ${\mathfrak S}?$
\vspace*{-0.2ex}
\item
What geometric properties share immersions with the same, possibly constant curvature of the normal bundle?
\end{itemize}
\end{itemize}
\subsection{The Euler-Lagrange equations}
We will derive the Euler-Lagrange equations for critical ONS ${\mathcal N}$. To this end, we consider a one-parameter family of rotations
\begin{equation*}
  {\mathbf R}(w,\varepsilon)
  =\big(r_{\sigma\vartheta}(w,\varepsilon)\big)_{\sigma,\vartheta=1,\ldots,n}
  \in C^\infty(B\times(-\varepsilon_0,+\varepsilon_0),SO(n)),
  \quad w=(u,v),
\end{equation*}
with sufficiently small $\varepsilon_0>0,$ such that
\begin{equation}\label{3.7}
  {\mathbf R}(w,0)={\mathbb E}^n\,,\quad
  \frac{\partial}{\partial\varepsilon}\,{\mathbf R}(w,0)={\mathbf A}(w)\in C^\infty(B,so(n)).
\end{equation}
Here, ${\mathbb E}^n$ denotes the $n$-dimensional unit matrix.\\[1ex]
For arbitrary skew-symmetric ${\mathbf A}(w)=(a_{\sigma\vartheta}(w))_{\sigma,\vartheta=1,\ldots,n}\in C^\infty(B,so(n))$, a family ${\mathbf R}(w,\varepsilon)$ with the property (\ref{3.7}) can be constructed by means of the geodesic flow in the manifold $SO(n)$ (see e.g. \cite{doCarmo_01}, chapter 3, section 2). By expansion around $\varepsilon=0$ we obtain
\begin{equation*}
  \mathbf R(w,\varepsilon)=\mathbb E^n+\varepsilon\mathbf A(w)+o(\varepsilon).
\end{equation*}
Now, we apply ${\mathbf R}$ to a given ONS ${\mathcal N}$. The new unit normal vectors $\widetilde N_1,\ldots,\widetilde N_n$ are given by
\begin{equation*}
  \widetilde N_\sigma
  =\sum_{\vartheta=1}^n r_{\sigma\vartheta}N_\vartheta
  =\sum_{\vartheta=1}^n
   \big\{
     \delta_{\sigma\vartheta}+\varepsilon a_{\sigma\vartheta}+o(\varepsilon)
   \big\}\,N_\vartheta
  =N_\sigma+\varepsilon\sum_{\vartheta=1}^na_{\sigma\vartheta}N_\vartheta+o(\varepsilon),
\end{equation*}
and we compute 
\begin{equation*}
  \widetilde N_{\sigma,u^\ell}
  =N_{\sigma,u^\ell}
   +\varepsilon\sum_{\vartheta=1}^n
    \big(
      a_{\sigma\vartheta,u^\ell}N_\vartheta+a_{\sigma\vartheta}N_{\vartheta,u^\ell}
    \big)
   +o(\varepsilon)
\end{equation*}
for their derivatives. Consequently, the new torsion coefficients can be expanded to
\begin{equation*}
  \widetilde T_{\sigma,\ell}^\omega=\widetilde N_{\sigma,u^\ell}\cdot\widetilde N_\omega^t
  =T_{\sigma,\ell}^\omega
   +\varepsilon a_{\sigma\omega,u^\ell}
   +\varepsilon\sum_{\vartheta=1}^n
    \big\{
      a_{\sigma\vartheta}T_{\vartheta,\ell}^\omega
      +a_{\omega\vartheta}T_{\sigma,\ell}^\vartheta
    \big\}
   +o(\varepsilon),
\end{equation*}
and for their squares we infer
\begin{equation*}
  (\widetilde T_{\sigma,\ell}^\omega)^2
  =(T_{\sigma,\ell}^\omega)^2
   +2\varepsilon
    \bigg\{
      a_{\sigma\omega,u^\ell}T_{\sigma,\ell}^\omega
      +\sum_{\vartheta=1}^n
       \big(
         a_{\sigma\vartheta}T_{\vartheta,\ell}^\omega T_{\sigma,\ell}^\omega
         +a_{\omega\vartheta}T_{\sigma,\ell}^\vartheta T_{\sigma,\ell}^\omega
       \big)
    \bigg\}
   +o(\varepsilon).
\end{equation*}
Before we insert this identity into the functional of total torsion, we observe
\begin{equation*}
\begin{array}{lll}
  \displaystyle
  \sum_{\sigma,\omega,\vartheta=1}^n
  \big\{
    a_{\sigma\vartheta}T_{\vartheta,\ell}^\omega T_{\sigma,\ell}^\omega
    +a_{\omega\vartheta}T_{\sigma,\ell}^\vartheta T_{\sigma,\ell}^\omega
  \big\}
  & = & \negthickspace\displaystyle
        \sum_{\sigma,\omega,\vartheta=1}^n
        \big\{
          a_{\sigma\vartheta}T_{\vartheta,\ell}^\omega T_{\sigma,\ell}^\omega
          +a_{\sigma\vartheta}T_{\omega,\ell}^\vartheta T_{\omega,\ell}^\sigma
        \big\} \\[4ex]
  & = & \negthickspace\displaystyle
        2\sum_{\sigma,\omega,\vartheta=1}^n
        a_{\sigma\vartheta}T_{\vartheta,\ell}^\omega T_{\sigma,\ell}^\omega
        \,=\,0,
\end{array}
\end{equation*}
taking the skew-symmetry of $\mathbf A$ into account.
\goodbreak\noindent
Now, the difference between the torsion functionals computes to ($a_{\sigma\omega,u^\ell}T_{\sigma,\ell}^\omega=a_{\omega\sigma,u^\ell}T_{\omega,\ell}^\sigma$)
\begin{equation*}
\begin{array}{lll}
  \displaystyle
  {\mathcal T}_X(\widetilde{\mathcal N})-{\mathcal T}_X({\mathcal N})\negthickspace
  & = & \negthickspace\displaystyle
        2\varepsilon
        \sum_{\sigma,\omega=1}^n\sum_{\ell=1}^2\,
        \int\hspace{-0.25cm}\int\limits_{\hspace{-0.3cm}B}
        a_{\sigma\omega,u^\ell}T_{\sigma,\ell}^\omega\,dudv
        +o(\varepsilon) \\[5ex]
  & = & \negthickspace\displaystyle
         4\varepsilon
         \sum_{1\le\sigma<\omega\le n}
         \int\hspace{-0.25cm}\int\limits_{\hspace{-0.3cm}B}
         \Big\{
           a_{\sigma\omega,u}T_{\sigma,1}^\omega
           +a_{\sigma\omega,v}T_{\sigma,2}^\omega
         \Big\}
        +o(\varepsilon)\\[5ex]
  & = & \negthickspace\displaystyle
        \,4\varepsilon
        \sum_{1\le\sigma<\omega\le n}\,
        \int\limits_{\partial B}
        a_{\sigma\omega}(T_{\sigma,1}^\omega,T_{\sigma,2}^\omega)\cdot\nu^t\,ds \\[5ex]
  &   & \negthickspace\displaystyle
        -\,4\varepsilon
        \sum_{1\le\sigma<\omega\le n}^n\,
        \int\hspace{-0.25cm}\int\limits_{\hspace{-0.3cm}B}
        a_{\sigma\omega}\,\mbox{div}\,(T_{\sigma,1}^\omega,T_{\sigma,2}^\omega)\,dudv
       +o(\varepsilon),
\end{array}
\end{equation*}
where $\nu$ denotes the outer unit normal of $\partial B.$ Thus, for critical ${\mathcal N}$ we infer
\begin{equation*}
  \sum_{1\le\sigma<\vartheta\le n}\,\,
  \int\limits_{\partial B}
  a_{\sigma\omega}(T_{\sigma,1}^\omega,T_{\sigma,2}^\omega)\cdot\nu^t\,ds
  -\sum_{1\le\sigma<\omega\le n}\,
      \int\hspace{-0.25cm}\int\limits_{\hspace{-0.3cm}B}
      a_{\sigma\omega}\,\mbox{div}\,(T_{\sigma,1}^\omega,T_{\sigma,2}^\omega)\,dudv
  =0
\end{equation*}
with arbitrary ${\mathbf A}\in C^\infty(B,so(n)).$ This implies the
\begin{proposition}
If the ONS ${\mathcal N}$ is critical for ${\mathcal T}_X,$ then its torsion coefficients satisfy
\begin{equation}\label{3.16}
  \mbox{\rm div}\,(T_{\sigma,1}^\vartheta,T_{\sigma,2}^\vartheta)=0\quad\mbox{in}\ B,\quad
  (T_{\sigma,1}^\vartheta,T_{\sigma,2}^\vartheta)\cdot\nu^t=0\quad\mbox{on}\ \partial B
\end{equation}
for all $(\sigma,\vartheta)\in U_n$.  
\end{proposition}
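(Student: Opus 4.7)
The variational computation preceding the statement has already reduced matters to the identity
\begin{equation*}
  \sum_{1\le\sigma<\omega\le n}\left\{\int_{\partial B}a_{\sigma\omega}(T_{\sigma,1}^\omega,T_{\sigma,2}^\omega)\cdot\nu^t\,ds-\iint_B a_{\sigma\omega}\,\mbox{div}\,(T_{\sigma,1}^\omega,T_{\sigma,2}^\omega)\,dudv\right\}=0,
\end{equation*}
valid for every $\mathbf{A}=(a_{\sigma\omega})\in C^\infty(B,so(n))$. My plan is to extract (\ref{3.16}) from this identity in two steps by the usual fundamental lemma of the calculus of variations, exploiting the fact that, under the skew-symmetry constraint on $\mathbf{A}$, the entries $a_{\sigma\omega}$ with $\sigma<\omega$ can be prescribed independently of one another.

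First I would fix a pair $(\sigma_0,\omega_0)\in U_n$ and choose $\mathbf{A}$ so that $a_{\sigma\omega}\equiv 0$ whenever $\{\sigma,\omega\}\neq\{\sigma_0,\omega_0\}$, while $a_{\sigma_0\omega_0}\in C_0^\infty(\mathring{B})$ is an arbitrary test function with compact support in the open disc. Then the boundary integral drops out, and the variational identity collapses to
\begin{equation*}
  \iint_B a_{\sigma_0\omega_0}\,\mbox{div}\,(T_{\sigma_0,1}^{\omega_0},T_{\sigma_0,2}^{\omega_0})\,dudv=0
\end{equation*}
for every $a_{\sigma_0\omega_0}\in C_0^\infty(\mathring{B})$. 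Since the divergence is continuous on $B$ by the regularity assumption $N_\sigma\in C^3(B,\mathbb{R}^{n+2})$, the fundamental lemma yields $\mbox{div}\,(T_{\sigma_0,1}^{\omega_0},T_{\sigma_0,2}^{\omega_0})=0$ throughout $B$.

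With the interior equation established, the bulk integral in the full variational identity vanishes for every admissible $\mathbf{A}$. Returning to the identity and again localizing to a single pair $(\sigma_0,\omega_0)\in U_n$, but now allowing $a_{\sigma_0\omega_0}\in C^\infty(\overline{B})$ to be arbitrary (in particular with arbitrary trace on $\partial B$), I obtain
\begin{equation*}
  \int_{\partial B}a_{\sigma_0\omega_0}\,(T_{\sigma_0,1}^{\omega_0},T_{\sigma_0,2}^{\omega_0})\cdot\nu^t\,ds=0
\end{equation*}
for all such $a_{\sigma_0\omega_0}$. By the one-dimensional version of the fundamental lemma applied on the smooth curve $\partial B$, this forces the natural boundary condition $(T_{\sigma_0,1}^{\omega_0},T_{\sigma_0,2}^{\omega_0})\cdot\nu^t=0$ on $\partial B$. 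Since $(\sigma_0,\omega_0)\in U_n$ was arbitrary, (\ref{3.16}) follows in full.

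There is no genuine obstacle here; the whole content of the proposition is already packaged in the preceding first-variation computation, and the only thing to be careful about is the bookkeeping with the skew-symmetry of $\mathbf{A}$, which is precisely why the sums have been rewritten over the index set $U_n=\{(\sigma,\omega):\sigma<\omega\}$ so that the $a_{\sigma\omega}$ with $\sigma<\omega$ constitute a set of freely varying scalar test functions.
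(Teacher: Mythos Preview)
Your argument is correct and matches the paper's approach exactly: the paper carries out the first-variation computation up to the displayed identity and then simply writes ``This implies the'' before stating the Proposition, leaving the routine application of the fundamental lemma (interior test functions first, then arbitrary traces for the natural boundary condition) implicit. Your proposal just spells out that standard last step.
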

\section{A second order system with quadratic growth}
\label{section4}
\setcounter{equation}{0}
\subsection{The functions $g^{(\sigma\vartheta)}$}
Interpreting the differential equations in (\ref{3.16}) as integrability conditions, we find functions $g^{(\sigma\vartheta)}\in C^2(B,\mathbb R)$ such that 
\begin{equation}\label{4.1}
  \nabla g^{(\sigma\vartheta)}=\big(-T_{\sigma,2}^\vartheta,T_{\sigma,1}^\vartheta\big)
  \quad\mbox{in}\ B\quad\mbox{for all}\ \sigma,\vartheta=1,\ldots,n.
\end{equation}
Due to the boundary conditions in (\ref{3.16}), which imply $\nabla g^{(\sigma\vartheta)}\cdot\tau^t=0$ on $\partial B$ with the unit tangent vector $\tau=(-v,u)$ at $\partial B,$ we may choose $g^{(\sigma\vartheta)}$ such that
\begin{equation}\label{4.2}
  g^{(\sigma\vartheta)}=0\quad\mbox{on}\ \partial B\quad\mbox{for all}\ \sigma,\vartheta=1,\ldots,n.
\end{equation}
Note that the matrix $(g^{(\sigma\vartheta)})_{\sigma,\vartheta=1,\ldots,n}$ is skew-symmetric.
\goodbreak\noindent
\subsection{An elliptic system of second order}
Let us define the quantities
\begin{equation*}
  \delta g^{(\sigma\vartheta)}:=\sum_{\omega=1}^n\mbox{det}\,\Big(\nabla g^{(\sigma\omega)},\nabla g^{(\omega\vartheta)}\Big),
  \quad\sigma,\vartheta=1,\ldots n.
\end{equation*}
The matrix $(\delta g^{(\sigma\vartheta)})_{\sigma,\vartheta=1,\ldots,n}$ is skew-symmetric. The functions $g^{(\sigma\vartheta)}$ solve a coupled quasilinear elliptic differential system with quadratic growth in the gradient:
\begin{proposition}
If ${\mathcal N}$ is critical for ${\cal T}_X$, then the functions $g^{(\sigma\vartheta)}$, $\sigma,\vartheta=1,\ldots,n$, are solutions of the boundary value problems
\begin{equation}\label{4.4}
  \Delta g^{(\sigma\vartheta)}=-\,\delta g^{(\sigma\vartheta)}+S_{\sigma,12}^\vartheta\quad\mbox{in}\ B,\quad
  g^{(\sigma\vartheta)}=0\quad\mbox{on}\ \partial B\,,
\end{equation}
where $\delta g^{(\sigma\vartheta)}$ grows quadratically in the gradient of $g^{(\sigma\vartheta)}$.
\end{proposition}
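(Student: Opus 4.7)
The plan is to derive the equation purely from the two inputs already in hand: the definition (4.1) of $g^{(\sigma\vartheta)}$ as a potential for the divergence-free field $(-T^\vartheta_{\sigma,2},T^\vartheta_{\sigma,1})$, together with Ricci's definition (2.10) of the normal curvature coefficients $S^\vartheta_{\sigma,12}$. The boundary condition is already recorded in (4.2), so only the PDE in $B$ needs to be established; this will be a direct computation, not a variational argument.

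First I would compute $\Delta g^{(\sigma\vartheta)}$ by differentiating (4.1):
\begin{equation*}
  \Delta g^{(\sigma\vartheta)}
  = g^{(\sigma\vartheta)}_{uu}+g^{(\sigma\vartheta)}_{vv}
  = -T^{\vartheta}_{\sigma,2,u}+T^{\vartheta}_{\sigma,1,v}
  = T^{\vartheta}_{\sigma,1,u^2}-T^{\vartheta}_{\sigma,2,u^1}.
\end{equation*}
The right-hand side is precisely the antisymmetric first-order part that appears in (2.10). Solving (2.10) for this combination gives
\begin{equation*}
  \Delta g^{(\sigma\vartheta)}
  = S^{\vartheta}_{\sigma,12}
    -\sum_{\omega=1}^n\bigl(T^{\omega}_{\sigma,1}T^{\vartheta}_{\omega,2}-T^{\omega}_{\sigma,2}T^{\vartheta}_{\omega,1}\bigr).
\end{equation*}

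The remaining step is to recognise the quadratic sum as $\delta g^{(\sigma\vartheta)}$. Using (4.1) once for $\nabla g^{(\sigma\omega)}=(-T^{\omega}_{\sigma,2},T^{\omega}_{\sigma,1})$ and once for $\nabla g^{(\omega\vartheta)}=(-T^{\vartheta}_{\omega,2},T^{\vartheta}_{\omega,1})$, a $2\times 2$ determinant gives
\begin{equation*}
  \det\bigl(\nabla g^{(\sigma\omega)},\nabla g^{(\omega\vartheta)}\bigr)
  = T^{\omega}_{\sigma,1}T^{\vartheta}_{\omega,2}-T^{\omega}_{\sigma,2}T^{\vartheta}_{\omega,1},
\end{equation*}
so summation over $\omega$ yields exactly $\delta g^{(\sigma\vartheta)}$ in the notation introduced just before the proposition. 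Substituting back produces the claimed equation $\Delta g^{(\sigma\vartheta)}=-\delta g^{(\sigma\vartheta)}+S^{\vartheta}_{\sigma,12}$.

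Finally, the quadratic growth assertion is immediate from this identification: each term in $\delta g^{(\sigma\vartheta)}$ is a $2\times 2$ determinant in the gradients $\nabla g^{(\sigma\omega)}$, $\nabla g^{(\omega\vartheta)}$, which is controlled by a constant (depending only on $n$) times $|\nabla g|^2$ when $|\nabla g|$ bounds all components of the skew-symmetric matrix $(\nabla g^{(\sigma\vartheta)})$. I do not anticipate a genuine obstacle here; the only point where one must be slightly careful is sign bookkeeping, both in the determinant (since the first component of $\nabla g^{(\sigma\vartheta)}$ carries a minus sign by (4.1)) and in the ordering of the last two terms of (2.10), to ensure that the quadratic correction comes out as $-\delta g^{(\sigma\vartheta)}$ rather than $+\delta g^{(\sigma\vartheta)}$.
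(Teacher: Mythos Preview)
Your proposal is correct and follows essentially the same approach as the paper: compute $\Delta g^{(\sigma\vartheta)}$ from (4.1), invoke (2.10) to express the antisymmetric derivative combination via $S^\vartheta_{\sigma,12}$ and the quadratic torsion terms, and then rewrite those terms as the determinants in $\delta g^{(\sigma\vartheta)}$. Your sign checks are accurate and the quadratic-growth remark is handled just as in the paper.
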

\begin{proof}
Choose any $(\sigma,\vartheta)\in\{1,\ldots,n\}^2.$ The formulas (\ref{2.10}) and (\ref{4.1}) imply
\begin{equation}\label{4.5}
\begin{array}{rcl}
  \Delta g^{(\sigma\vartheta)}\negthickspace
  & = & \negthickspace\displaystyle
        T_{\sigma,1,v}^\vartheta -T_{\sigma,2,u}^\vartheta
        \,=\,-\sum_{\omega=1}^nT_{\sigma,1}^\omega T_{\omega,2}^\vartheta
             +\sum_{\omega=1}^nT_{\sigma,2}^\omega T_{\omega,1}^\vartheta
             +S_{\sigma,12}^\vartheta \\[4ex]
  & = & \negthickspace\displaystyle
        \sum_{\omega=1}^n
        \Big\{
          g_v^{(\sigma\omega)}g_u^{(\omega\vartheta)}-g_u^{(\sigma\omega)}g_v^{(\omega\vartheta)}
        \Big\}
        +S_{\sigma,12}^\vartheta\,,
\end{array}
\end{equation}
and the statement follows.
\end{proof}
\subsection{Examples}
Let us write $\mathbf G:=(g^{(\sigma\vartheta)})_{\sigma,\vartheta=1,\ldots,n}$, $\mathbf S:=(S_{\sigma,12}^\vartheta)_{\sigma,\vartheta=1,\ldots,n}$, and $\delta\mathbf G:=(\delta g^{(\sigma\vartheta)})_{\sigma,\vartheta=1,\ldots,n}$. We discuss the special cases $n=1,2,3.$
\begin{itemize}
\item[1.]
For $n=1$ ($X$ is immersed in $\mathbb R^3$) there are no torsions.
\vspace*{-1ex}
\item[2.]
The case $n=2$ ($X$ is immersed in $\mathbb R^4$) was already considered in \cite{Froehlich_Mueller_01}. There hold
\begin{equation}\label{4.6}
  {\mathbf S}
  =\begin{pmatrix}
     0         & S_{1,12}^2 \\[0.1cm]
     S_{2,12}^1 & 0
   \end{pmatrix},\quad
  {\mathbf G}
  =\begin{pmatrix}
     0       & g^{(12)} \\[0.1cm]
     g^{(21)} & 0
   \end{pmatrix},\quad
  \delta\mathbf G
  =\begin{pmatrix}
     0       & 0 \\[0.1cm]
     0 & 0
   \end{pmatrix},
\end{equation}
such that the system (\ref{4.4}) reduces to the single equation
\begin{equation*}
  \Delta g^{(12)}=S_{1,12}^2\quad\mbox{in}\ B,\quad g^{(12)}=0\quad\mbox{on}\ \partial B.
\end{equation*}
Then, potential theoretical estimates for elliptic equations ensure
\begin{equation*}
  \|g^{(12)}\|_{C^{1+\alpha}(B)}\le C(\alpha,\|S_{1,12}^2\|_\infty)
  \quad\mbox{for all}\ \alpha\in(0,1)
\end{equation*}
with a real $C\in(0,+\infty)$ depending on $\alpha$ and the $L^\infty$-norm of $S_{1,12}^2$ (see e.g. \cite{Sauvigny_02}).\\[1ex]
Instead of this, in \cite{Froehlich_Mueller_01} we study a Riemann-Hilbert problem for $T_{1,1}^2+iT_{1,2}^2$ using methods from the complex analysis of generalized analytic functions.
\vspace*{-1ex}
\item[3.]
Let us now consider the case $n=3$ ($X$ is immersed in $\mathbb R^5$): We have
\begin{equation*}
\begin{array}{l}
  \displaystyle
  {\mathbf S}
  =\begin{pmatrix}
     0         & S_{1,12}^2 & S_{1,12}^3 \\[0.1cm]
     S_{2,12}^1 & 0         & S_{2,12}^3 \\[0.1cm]
     S_{3,12}^1 & S_{3,12}^2 & 0
   \end{pmatrix},\quad
  {\mathbf G}
  =\begin{pmatrix}
     0       & g^{(12)} & g^{(13)} \\[0.1cm]
     g^{(21)} & 0       & g^{(23)} \\[0.1cm]
     g^{(31)} & g^{(32)} & 0
  \end{pmatrix}, \\[6ex]
  \displaystyle
  \delta\mathbf G
  =\begin{pmatrix}
     0 & \mbox{det}\,\Big(\nabla g^{(13)},\nabla g^{(32)}\Big)
       & \mbox{det}\,\Big(\nabla g^{(12)},\nabla g^{(23)}\Big) \\[0.3cm]
     \mbox{det}\,\Big(\nabla g^{(23)},\nabla g^{(31)}\Big)
       & 0
       & \mbox{det}\,\Big(\nabla g^{(21)},\nabla g^{(13)}\Big)\\[0.3cm]
     \mbox{det}\Big(\nabla g^{(32)},\nabla g^{(21)}\Big)
       & \mbox{det}\,\Big(\nabla g^{(31)},\nabla g^{(12)}\Big)  & 0
   \end{pmatrix}.
\end{array}
\end{equation*}
Comparing with (\ref{4.4}) gives the three equations
\begin{equation*}
\begin{array}{lll}
  \Delta g^{(12)}\negthickspace
  & = & \negthickspace\displaystyle
        g_v^{(13)}g_u^{(32)}-g_u^{(13)}g_v^{(32)}+S_{1,12}^2\,, \\[0.2cm]
  \Delta g^{(13)}\negthickspace
  & = & \negthickspace\displaystyle
        g_v^{(12)}g_u^{(23)}-g_u^{(12)}g_v^{(23)}+S_{1,12}^3\,, \\[0.2cm]
  \Delta g^{(23)}\negthickspace
  & = & \negthickspace\displaystyle
        g_v^{(21)}g_u^{(13)}-g_u^{(21)}g_v^{(13)}+S_{2,12}^3\,.
\end{array}
\end{equation*}
Now, if we set ${\mathcal G}:=(g^{(12)},g^{(13)},g^{(23)})$ and ${\mathcal S}:=(S_{1,12}^2,S_{1,12}^3,S_{2,12}^3),$ then
\begin{equation*}
  \Delta{\mathcal G}={\mathcal G}_u\times{\mathcal G}_v+\mathcal S\quad\mbox{in}\ B,\quad\mathcal G=0\quad\mbox{on}\ \partial B
\end{equation*}
with the usual vector product $\times$ in $\mathbb R^3.$ That means: \emph{$\mathcal G$ solves an inhomogeneous $H$-surface system with $H=\frac12$ and vanishes on the boundary.}
\end{itemize}
\subsection{The Grassmann-type vectors ${\mathcal G}$, $\delta\mathcal G$, and ${\mathcal S}$}
The last example gives rise to the definition of the following {\it vector of Grassmann type}
\begin{equation}\label{4.12}
  {\mathcal G}:=\big(g^{(\sigma\vartheta)}\big)_{1\le\sigma<\vartheta\le n}\in\mathbb R^N\,,\quad N:=\frac{n}{2}\,(n-1).
\end{equation}
In our examples, ${\mathcal G}$ works as follows:
\begin{equation*}
\begin{array}{lll}
  {\mathcal G}=g^{(12)}\in\mathbb R                             & \mbox{for}\ n=2, \\[0.1cm]
  {\mathcal G}=\big(g^{(12)},g^{(13)},g^{(23)}\big)\in\mathbb R^3 & \mbox{for}\ n=3.
\end{array}
\end{equation*}
Analogously, we define the Grassmann-type vectors
\begin{equation*}
  \delta\mathcal G:=\big(\delta g^{(\sigma\vartheta)}\big)_{1\le\sigma<\vartheta\le n}\in\mathbb R^N,\quad 
  \mathcal S:=\big(S_{\sigma,12}^\vartheta\big)_{1\le\sigma<\vartheta\le n}\in\mathbb R^N\,.
\end{equation*}
Then, the relations (\ref{4.4}) can be written as
\begin{equation}\label{4.15}
  \Delta\mathcal G=-\delta\mathcal G+\mathcal S\quad\mbox{in}\ B,\quad\mathcal G=0\quad\mbox{on}\ \partial B.
\end{equation}
From the definition of $\delta\mathcal G$, we immediately obtain the estimate 
\begin{equation}\label{4.16}
  |\Delta\mathcal G|\le c\,|\nabla\mathcal G|^2+|\mathcal S|\quad\mbox{in}\ B 
\end{equation}
with some constant $c>0$. 
\begin{remarks}\quad
\begin{itemize}
\item[1.]
The fact that $\Delta\mathcal G$ grows quadratically in $\nabla\mathcal G$ enables us to apply fundamental results on nonlinear elliptic systems due to E.\,Heinz \cite{Heinz_03}, \cite{Heinz_01} and F.\,Sauvigny \cite{Sauvigny_02}. And the special structure of $\delta\mathcal G$ allows us to utilize H.\,C.\,Wente's $L^\infty$-estimate \cite{Wente_01}, \cite{Topping_01}.
\vspace*{-0.6ex}
\item[2.]
For the homogeneous case $\mathcal S=0$, existence results for systems of the type (\ref{4.15}) can be found, e.g., in \cite{Heinz_02}, \cite{Wente_01} (for $n=3$), and \cite{Sauvigny_02} (for $n\ge 3$). In \cite{Takahashi_01} existence and multiplicity questions have been addressed in the inhomogeneous case (in our language: non-trivial bundle) for codimension $n=3$.
\vspace*{-0.6ex}
\item[3.]
The result in \cite{Sauvigny_02} can be extended to the inhomogeneous case: The boundary value problem (\ref{4.15}), (\ref{4.16}) has a solution ${\mathcal G}$, whenever ${\mathcal S}$ satisfies a smallness condition.
\vspace*{-0.6ex}
\item[4.]
Starting with a critical ONS $\mathcal N$, the mapping $\mathcal G=(g^{(\sigma\vartheta)})_{1\le\sigma<\vartheta\le n}$ from (\ref{4.1}), (\ref{4.2}) turns out to be a solution of (\ref{4.15}). Vice versa, solving (\ref{4.15}) for given $\mathcal S$ provides a first step towards the construction of a critical ONS $\mathcal N$.
\end{itemize}
\end{remarks}
\noindent
We plan to return to the questions in 3. and 4. in the future.
\subsection{A useful estimate}
Because the exact knowledge of the constant $c>0$ in (\ref{4.16}) will become important in section 6, we conclude the present section with the following
\begin{proposition}
It holds
\begin{equation}\label{4.17}
  |\Delta{\mathcal G}|
  \le\frac{\sqrt{n-2}}{2}\,|\nabla\mathcal G|^2+|\mathcal S|
  \quad\mbox{in}\ B.
\end{equation}
\end{proposition}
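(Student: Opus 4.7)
The plan is to reduce to bounding $|\delta\mathcal{G}|$ and to combine a single Cauchy--Schwarz step with the Cauchy--Binet identity to reach the stated prefactor. From (\ref{4.15}) and the triangle inequality one has $|\Delta\mathcal{G}|\le|\delta\mathcal{G}|+|\mathcal{S}|$, so it suffices to establish
\[
  |\delta\mathcal{G}|^{2}\le \frac{n-2}{4}\,|\nabla\mathcal{G}|^{4}.
\]

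First, for each pair $\sigma<\vartheta$ I would observe that the $n$ terms defining $\delta g^{(\sigma\vartheta)}$ contain only $n-2$ non-vanishing summands, since $\nabla g^{(\omega\omega)}\equiv 0$ annihilates the contributions with $\omega\in\{\sigma,\vartheta\}$. Applying Cauchy--Schwarz to these $n-2$ terms yields
\[
  |\delta g^{(\sigma\vartheta)}|^{2}\le (n-2)\sum_{\omega=1}^{n}\bigl|\det\bigl(\nabla g^{(\sigma\omega)},\,\nabla g^{(\omega\vartheta)}\bigr)\bigr|^{2}.
\]

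Summing over $\sigma<\vartheta$, the combinatorial point is that the resulting double sum $\sum_{\sigma<\vartheta}\sum_{\omega}$ enumerates the unordered pairs of \emph{adjacent} edges of the complete graph on $\{1,\ldots,n\}$ \emph{exactly once}: a $3$-subset $\{a,b,c\}$ produces its three adjacent-edge pairs $\{\{a,b\},\{a,c\}\}$, $\{\{a,b\},\{b,c\}\}$, $\{\{a,c\},\{b,c\}\}$, one per choice of common vertex $\omega\in\{a,b,c\}$. Bounding this adjacent-pair sum by the sum over \emph{all} unordered pairs of edges, I would then invoke Cauchy--Binet applied to the $2\times N$ matrix $\mathbf{V}$ whose columns are the vectors $\nabla g^{(e)}$, $e=\{i,j\}$ with $i<j$:
\[
  \sum_{e<e'}\bigl|\det\bigl(\nabla g^{(e)},\,\nabla g^{(e')}\bigr)\bigr|^{2}
  =\det(\mathbf{V}\mathbf{V}^{T})
  =|\mathcal{G}_{u}|^{2}|\mathcal{G}_{v}|^{2}-(\mathcal{G}_{u}\cdot\mathcal{G}_{v})^{2}
  \le|\mathcal{G}_{u}|^{2}|\mathcal{G}_{v}|^{2}.
\]

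Chaining these bounds gives $|\delta\mathcal{G}|^{2}\le (n-2)\,|\mathcal{G}_{u}|^{2}|\mathcal{G}_{v}|^{2}$, and the elementary AM--GM inequality $|\mathcal{G}_{u}|^{2}|\mathcal{G}_{v}|^{2}\le\tfrac14(|\mathcal{G}_{u}|^{2}+|\mathcal{G}_{v}|^{2})^{2}=\tfrac14|\nabla\mathcal{G}|^{4}$ closes the estimate; taking square roots and reinserting into $|\Delta\mathcal{G}|\le|\delta\mathcal{G}|+|\mathcal{S}|$ yields (\ref{4.17}). The step I anticipate as the main obstacle is the combinatorial accounting: one must verify carefully that every unordered adjacent-edge pair is produced by the triple $(\sigma,\vartheta;\omega)$ with $\sigma<\vartheta$ and $\omega\notin\{\sigma,\vartheta\}$ exactly once (and with a sign that is harmless under $|\cdot|^{2}$), so that Cauchy--Binet, which a priori enumerates all pairs, serves as a tight majorant without inflating the constant $(n-2)/4$.
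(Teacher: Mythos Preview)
Your proof is correct and follows essentially the same route as the paper: reduce to bounding $|\delta\mathcal G|$, apply Cauchy--Schwarz to the $n-2$ nonzero summands, observe that the resulting sum of squared $2\times2$ minors is a sub-sum of the full Cauchy--Binet/Lagrange expansion of $|\mathcal G_u|^2|\mathcal G_v|^2-(\mathcal G_u\cdot\mathcal G_v)^2$, and finish with AM--GM. The only difference is terminology---the paper phrases the key step via the exterior product $\mathcal G_u\wedge\mathcal G_v$ and Lagrange's identity rather than Cauchy--Binet, and handles the combinatorics by splitting on the relative order of $\omega,\sigma,\vartheta$ rather than your graph-theoretic language of adjacent edges---but the content is identical.
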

\begin{proof}From (\ref{4.15}) we know
\begin{equation}\label{4.18}
  |\Delta{\mathcal G}|\le |\delta\mathcal G|+|\mathcal S|\quad\mbox{in}\ B.
\end{equation}
It remains to estimate $|\delta\mathcal G|$ appropriately.
\begin{itemize}
\item[1.]
We begin with the inequality
\begin{equation}\label{4.19}
\begin{array}{rcl}
  |\delta\mathcal G|^2\negthickspace
  & = & \negthickspace\displaystyle
        \sum_{1\le\sigma<\vartheta\le n}
        \left\{\,
          \sum_{\omega=1}^n
          \det\big(\nabla g^{(\sigma\omega)},\nabla g^{(\omega\vartheta)}\big)
        \right\}^2\\[4ex]
  & \le & \negthickspace\displaystyle
          (n-2)
          \sum_{1\le\sigma<\vartheta\le n}
          \left\{\,
            \sum_{\omega=1}^n
            \det\big(\nabla g^{(\sigma\omega)},\nabla g^{(\omega\vartheta)}\big)^2
          \right\} \\[4ex]
  & = & \negthickspace\displaystyle
        (n-2)
        \sum_{1\le\sigma<\vartheta\le n}
        \Bigg\{\,
          \sum_{\omega<\sigma}
          \det\big(\nabla g^{(\omega\sigma)},\nabla g^{(\omega\vartheta)}\big)^2
          +\sum_{\sigma<\omega<\vartheta}
           \det\big(\nabla g^{(\sigma\omega)},\nabla g^{(\omega\vartheta)}\big)^2 \\[4ex]
  &  & \negthickspace\displaystyle
        \hspace{20ex}
        +\sum_{\vartheta<\omega}
         \det\big(\nabla g^{(\sigma\omega)},\nabla g^{(\vartheta\omega)}\big)^2\Bigg\}\,.
\end{array}
\end{equation}
Note that only derivatives of elements of $\mathcal G$ appear on the right hand side of (\ref{4.19}).
\item[2.]
Denote by $e_i=(0,\ldots,0,1,0,\ldots,0)\in\mathbb R^m$ the $i$-th standard basis vector. We recall the \emph{exterior wedge product} of two vectors $X,Y\in\mathbb R^m,$
\begin{equation*}
  X\wedge Y=\sum_{1\le i<j\le m}(-1)^{ij}(x^iy^j-x^jy^i)\,e_i\wedge e_j\,,
\end{equation*}
where $\{e_i\wedge e_j\}_{1\le i<j\le m}$ forms an orthonormal basis of the Euclidean space $\mathbb R^M$ for $M:=\frac{m}{2}(m-1)$ (see e.g. \cite{Heil_01}). Using the Lagrange's identity, we may estimate
\begin{equation}\label{4.21}
  |X\wedge Y|^2=|X|^2|Y|^2-(X\cdot Y^t)^2\le|X|^2|Y|^2\,.
\end{equation}
\item[3.]
Applying these settings to $\mathcal G$ with $m=N$ ($N$ from (\ref{4.12})), relation (\ref{4.19}) yields
\begin{equation}\label{4.22}
  |\delta{\mathcal G}|^2
  \le (n-2)|{\mathcal G}_u\wedge{\mathcal G}_v|^2\le (n-2)|{\mathcal G}_u|^2|{\mathcal G}_v|^2\,.
\end{equation}
Actually, ${\mathcal G}_u\wedge{\mathcal G}_v$ has more components than appear on the right hand side of (\ref{4.19}).
Combining (\ref{4.22}) with (\ref{4.18}) gives
\begin{equation*}
  |\Delta\mathcal G|
  \le\sqrt{n-2}\,|\mathcal G_u||\mathcal G_v|+|\mathcal S|
  \le\frac{\sqrt{n-2}}{2}\,|\nabla\mathcal G|^2+|\mathcal S|,
\end{equation*}
which proves the statement.\vspace*{-5ex}
\end{itemize}
\end{proof}
\section{Immersions with flat normal bundle}
\label{section5}
Assuming that the normal bundle of a given immersion $X$ is flat, we prove that any ONS ${\mathcal N},$ which is critical for the functional of total torsion, must be free of torsion (then ${\mathcal N}$ is called a {\it parallel} section).
\setcounter{equation}{0}
\subsection{Immersions with flat normal bundle}
\begin{definition}
The immersion $X$ has flat normal bundle ${\mathfrak S}\equiv 0$ iff $S_{\sigma,ij}^\vartheta\equiv 0$ in $B$ for all $i,j=1,2$ and $\sigma,\vartheta=1,\ldots,n.$
\end{definition}
\subsection{A lemma on an auxiliary function}
For the proof we need the following
\begin{lemma}
Let the immersion $X$ with flat normal bundle ${\mathfrak S}\equiv 0$ together with a critical ONS ${\mathcal N}$ be given. Then the function
\begin{equation*}
  f(w):=\mathcal G_w(w)\cdot\mathcal G_w^t(w)
\end{equation*}
vanishes identically in $B.$
\end{lemma}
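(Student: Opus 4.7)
The plan is to show that under the flatness assumption $f$ is holomorphic in $\mathring B$ and that the Dirichlet condition $\mathcal G|_{\partial B}=0$ then forces $f\equiv 0$ via a short boundary Liouville argument.

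For the holomorphicity, I would arrange the components of $\mathcal G$ in the skew-symmetric $n\times n$ matrix $G:=(g^{(\sigma\vartheta)})_{\sigma,\vartheta=1,\ldots,n}$. Skew-symmetry of $G_w$ gives the trace identity $2f=\sum_{\sigma,\vartheta}(g_w^{(\sigma\vartheta)})^2=\mathrm{tr}(G_w G_w^t)=-\mathrm{tr}(G_w^2)$, hence
\begin{equation*}
  f_{\bar w}=-\tfrac{1}{2}\,\mathrm{tr}\bigl(2\,G_w\,G_{w\bar w}\bigr)=-\tfrac{1}{4}\,\mathrm{tr}\bigl(G_w\,\Delta G\bigr).
\end{equation*}
The very definition of $\delta g^{(\sigma\vartheta)}$ exhibits $\delta G$ as the matrix commutator $[G_u,G_v]$, so under flatness equation (\ref{4.15}) reads $\Delta G=-[G_u,G_v]$. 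Rewriting through $G_u=G_w+G_{\bar w}$ and $G_v=i(G_w-G_{\bar w})$ yields $[G_u,G_v]=-2i[G_w,G_{\bar w}]$ and therefore
\begin{equation*}
  f_{\bar w}=-\tfrac{i}{2}\,\mathrm{tr}\bigl(G_w[G_w,G_{\bar w}]\bigr)=-\tfrac{i}{2}\bigl(\mathrm{tr}(G_w^2 G_{\bar w})-\mathrm{tr}(G_w G_{\bar w} G_w)\bigr)=0
\end{equation*}
by cyclicity of the trace. Thus $f$ is holomorphic in $\mathring B$.

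For the boundary step, the condition $\mathcal G=0$ on $\partial B$ forces the tangential derivative of $\mathcal G$ along $\partial B$ to vanish, so at $z=u+iv\in\partial B$ the gradient is purely radial, $\nabla\mathcal G(z)=\lambda(z)\,(u,v)$ with some $\lambda(z)\in\mathbb R^N$. Consequently $\mathcal G_w(z)=\tfrac{1}{2}\bar z\,\lambda(z)$ on $\partial B$, and
\begin{equation*}
  z^2 f(z)=\tfrac{1}{4}\,|z|^4\,|\lambda(z)|^2=\tfrac{1}{4}\,|\lambda(z)|^2\in\mathbb R_{\ge 0}\quad\text{on }\partial B.
\end{equation*}
Since $\phi(z):=z^2f(z)$ is holomorphic in $\mathring B$ with real boundary values, its imaginary part is harmonic in $\mathring B$ and vanishes on $\partial B$; by uniqueness of the Dirichlet problem $\mathrm{Im}\,\phi\equiv 0$, so $\phi$ is real-valued and holomorphic, therefore constant, and $\phi(0)=0$ forces $\phi\equiv 0$. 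Dividing by $z^2$ gives $f\equiv 0$ in $B$.

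The main obstacle I anticipate lies in the bookkeeping leading to the commutator identities $\delta G=[G_u,G_v]$ and $\Delta G=2i[G_w,G_{\bar w}]$; once the sign conventions are nailed down, the vanishing of $f_{\bar w}$ reduces to one application of the cyclic property of the trace, and the boundary step becomes the standard observation that a holomorphic function which is real on $\partial B$ is constant.
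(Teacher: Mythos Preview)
Your proof is correct and follows the same overall strategy as the paper: establish that $f$ is holomorphic in $\mathring B$, then use the Dirichlet condition $\mathcal G|_{\partial B}=0$ to see that $w^2f$ is real on $\partial B$, hence constant by the Cauchy--Riemann equations, hence identically zero since it vanishes at the origin.

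The one difference worth noting is in the holomorphicity step. The paper verifies $f_{\bar w}=0$ by writing out the double sum $\sum_{\sigma,\vartheta}g_w^{(\sigma\vartheta)}\Delta g^{(\sigma\vartheta)}$, inserting the expression (\ref{4.5}) for $\Delta g^{(\sigma\vartheta)}$ with $S\equiv0$, and then cyclically relabelling the indices $(\sigma,\omega,\vartheta)$ to exhibit the cancellation. Your matrix formulation---recognising $\delta G=[G_u,G_v]$, rewriting this as $-2i[G_w,G_{\bar w}]$, and then invoking $\mathrm{tr}(G_w[G_w,G_{\bar w}])=0$ via cyclicity of the trace---is exactly the same cancellation, but packaged so that the combinatorics is absorbed into a single algebraic identity. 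This is a genuine gain in transparency: the paper's index shuffle is the trace identity in disguise. Your boundary step (radial gradient on $\partial B$, hence $z^2f\in\mathbb R$ there) is identical in substance to the paper's computation that $\mathrm{Im}(wg_w^{(\sigma\vartheta)})=0$ on $\partial B$. A minor bookkeeping remark: the paper uses the unnormalised Wirtinger derivatives $\varphi_w=\varphi_u-i\varphi_v$, so $\varphi_{w\bar w}=\Delta\varphi$ without the factor $\tfrac14$; your constants correspond to the standard normalisation, but since the target is $f_{\bar w}=0$ this is immaterial.
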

\begin{remark}
Here, we use Wirtinger's calculus
\begin{equation*}
  \varphi_w:=\varphi_u-i\varphi_v\,,\quad
  \varphi_{\overline w}:=\varphi_u+i\varphi_v\,,\quad
  w=u+iv,
\end{equation*}
for a complex-valued function $\varphi=\varphi(w).$ Take note of the relation $\varphi_{w\overline w}=\Delta\varphi$.
\end{remark}
\begin{proof}[Proof of the lemma]
We will prove that $f$ solves the boundary value problem
\begin{equation*}
  f_{\overline w}=0\quad\mbox{in}\ B,\quad \mbox{Im}(w^2f)=0\quad\mbox{on}\ \partial B.
\end{equation*}
Then, the analytic function $g(w):=w^2f(w)$ has vanishing imaginary part, the Cauchy-Rie\-mann equations imply $g(w)\equiv c\in\mathbb R,$ and the assertion follows from $g(0)=0.$
\begin{itemize}
\item[1.]
In order to deduce the stated boundary condition, recall that $g^{(\sigma\vartheta)}=0$ on $\partial B$. Thus, all tangential derivatives vanish identically:
\begin{equation*}
  -vg_u^{(\sigma\vartheta)}+ug_v^{(\sigma\vartheta)}=-\mbox{Im}(wg_w^{(\sigma\vartheta)})=0\quad\mbox{on}\ \partial B
\end{equation*}
for all $\sigma,\vartheta=1,\ldots,n.$ The statement follows from
\begin{equation*}
\hspace*{-2ex}
\begin{array}{lll}
  \mbox{Im}\,(w^2f)\negthickspace
  & = & \negthickspace\displaystyle
        \mbox{Im}\,\Big(w^2\,\mathcal G_w\cdot\mathcal G^t_w\Big)
        \,=\,\mbox{Im}\,
             \bigg\{
               w^2\sum_{1\le\sigma<\vartheta\le n}g_w^{(\sigma\vartheta)}g_w^{(\sigma\vartheta)}
             \bigg\} \\[4ex]
  & = & \negthickspace\displaystyle
        \sum_{1\le\sigma<\vartheta\le n}\!
             \mbox{Im}\,\Big\{\big(wg_w^{(\sigma\vartheta)}\big)\big(wg_w^{(\sigma\vartheta)}\big)\Big\}
        \,=\,2\!\sum_{1\le\sigma<\vartheta\le n}\!
         \mbox{Re}\,\big(wg_w^{(\sigma\vartheta)}\big)\,
         \mbox{Im}\,\big(wg_w^{(\sigma\vartheta)}\big)\,=\,0
\end{array}
\end{equation*}
\item[2.]
Finally, we show the analyticity of $f$  with the aid of (\ref{4.4}): Interchanging indices cyclically yields
\begin{equation*}
\begin{array}{lll}
  f_{\overline w}\negthickspace
  & = & \negthickspace\displaystyle
        2\,\mathcal G_w\cdot\mathcal G^t_{w\overline w}
        \,=\,2\sum_{1\le\sigma<\vartheta\le n}
             g_w^{(\sigma\vartheta)}g_{w\overline w}^{(\sigma\vartheta)}
		\,=\,\sum_{\sigma,\vartheta=1}^n
			 g_w^{(\sigma\vartheta)}\Delta g^{(\sigma\vartheta)}\\[4ex]
  & = & \negthickspace\displaystyle
        \sum_{\sigma,\vartheta,\omega=1}^n
        \Big\{
          g_v^{(\sigma\omega)}g_u^{(\omega\vartheta)}g_u^{(\sigma\vartheta)}
          -g_u^{(\sigma\omega)}g_v^{(\omega\vartheta)}g_u^{(\sigma\vartheta)}
        \Big\} \\[4ex]
  &   & \negthickspace\displaystyle
        -\,i\sum_{\sigma,\vartheta,\omega=1}^n
            \Big\{
              g_v^{(\sigma\omega)}g_u^{(\omega\vartheta)}g_v^{(\sigma\vartheta)}
              -g_u^{(\sigma\omega)}g_v^{(\omega\vartheta)}g_v^{(\sigma\vartheta)}
            \Big\} \\[4ex]
  & = & \negthickspace\displaystyle
        \sum_{\sigma,\vartheta,\omega=1}^n
        \Big\{
          g_v^{(\omega\vartheta)}g_u^{(\vartheta\sigma)}g_u^{(\omega\sigma)}
          -g_u^{(\sigma\omega)}g_v^{(\omega\vartheta)}g_u^{(\sigma\vartheta)}
        \Big\} \\[4ex]
  &   & \negthickspace\displaystyle
        -\,i\sum_{\sigma,\vartheta,\omega=1}^n
            \Big\{
              g_v^{(\vartheta\sigma)}g_u^{(\sigma\omega)}g_v^{(\vartheta\omega)}
              -g_u^{(\sigma\omega)}g_v^{(\omega\vartheta)}g_v^{(\sigma\vartheta)}
            \Big\},
\end{array}
\end{equation*}
which shows $f_{\overline w}=0.$ The proof is complete.\vspace{-5ex}
\end{itemize}
\end{proof}
\subsection{Torsion-free ONS for flat normal bundles}
Our first theorem concerns the torsion of critical ONS ${\mathcal N}$ in the case of flat normal bundles.
\begin{theorem}
Let $X\in C^4(B,\mathbb R^{n+2})$ be an immersion with flat normal bundle ${\mathfrak S}\equiv 0.$ Then, for any critical ONS ${\mathcal N},$ the torsions $T_{\sigma,i}^\vartheta$, $i=1,2$, $\sigma,\vartheta=1,\ldots,n$, vanish identically in $B$.
\end{theorem}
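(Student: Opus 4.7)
My plan is to use the Lemma to turn the Dirichlet condition $\mathcal{G}|_{\partial B}=0$ into the overdetermined Cauchy condition $\mathcal{G}|_{\partial B}=\nabla\mathcal{G}|_{\partial B}=0$, and then to propagate $\mathcal{G}\equiv 0$ through $B$ using the system (\ref{4.15}) and the estimate (\ref{4.17}); formula (\ref{4.1}) then immediately yields $T_{\sigma,i}^\vartheta\equiv 0$ for all admissible indices.

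From the Lemma, $f=\mathcal{G}_w\cdot\mathcal{G}_w^t\equiv 0$, whose real and imaginary parts give the weak conformality $|\mathcal{G}_u|^2=|\mathcal{G}_v|^2$ and $\mathcal{G}_u\cdot\mathcal{G}_v^t=0$ in $B$. On $\partial B$, the vanishing of the tangential derivative of $\mathcal{G}$ reads, in complex notation, that $w\mathcal{G}_w$ is real-valued on $\partial B$; combined with $(w\mathcal{G}_w)\cdot(w\mathcal{G}_w)^t=w^2 f\equiv 0$, this shows the real vector $w\mathcal{G}_w$ has zero squared length on $\partial B$, hence $w\mathcal{G}_w\equiv 0$ and thus $\nabla\mathcal{G}=0$ on $\partial B$.

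For the propagation, set $\Phi:=\mathcal{G}_w$. In the flat case $\mathcal{S}\equiv 0$ we have $\Phi_{\bar w}=\Delta\mathcal{G}=-\delta\mathcal{G}$, so by (\ref{4.17}), $|\Phi_{\bar w}|\le\tfrac{\sqrt{n-2}}{2}|\Phi|^2$; moreover $\Phi\cdot\Phi^t=f\equiv 0$ and $\Phi|_{\partial B}=0$. Since $\Phi$ is continuous up to $\partial B$, there is a boundary strip on which $|\Phi|<\varepsilon$ is arbitrarily small; on that strip the quadratic bound becomes the linear Carleman-type estimate $|\Phi_{\bar w}|\le\tfrac{\sqrt{n-2}}{2}\varepsilon|\Phi|$. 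The similarity principle for Vekua-type systems then yields $\Phi=e^T h$ with $T$ continuous and $h$ component-wise holomorphic; since $\Phi|_{\partial B}=0$ forces $h|_{\partial B}=0$ and hence $h\equiv 0$ by boundary uniqueness of holomorphic functions, we obtain $\Phi\equiv 0$ (equivalently $\mathcal{G}\equiv 0$) on the strip. A bootstrap that successively replaces $\partial B$ by the interior boundary of the growing zero set of $\mathcal{G}$ --- at each stage, $\mathcal{G}$ and $\nabla\mathcal{G}$ both vanish on its closure by continuity --- extends $\mathcal{G}\equiv 0$ to all of $B$.

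The main obstacle is the bootstrap step, where the quadratic gradient nonlinearity must be tamed uniformly across the enlarging interior region on which $\mathcal{G}\equiv 0$. The sharp constant $\sqrt{n-2}/2$ from (\ref{4.17}) is essential: it controls exactly how small $|\Phi|$, and hence the size of the neighborhood, must be for the similarity-principle linearization to apply, so that the bootstrap can proceed across any interior boundary point of $\{\mathcal{G}=0\}$.
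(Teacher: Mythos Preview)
Your opening step---using the Lemma to deduce conformality and then $\nabla\mathcal G=0$ on $\partial B$---is exactly right and matches the paper. From that point on, however, the two arguments diverge, and yours has a real gap.

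The paper does not attempt any inward propagation. Once conformality is established, it invokes Heinz's asymptotic expansion for conformally parametrized solutions of systems satisfying $|\Delta\mathcal G|\le c|\nabla\mathcal G|^2$: if $\mathcal G\not\equiv\mathrm{const}$, then branch points (where $\nabla\mathcal G=0$) are \emph{isolated}, also on $\partial B$. Since you have already shown that \emph{every} boundary point is a branch point, this is an immediate contradiction, and $\mathcal G\equiv 0$ follows in one stroke. No bootstrap, no strip argument, no smallness of $|\Phi|$ is needed; the quadratic growth and conformality are absorbed entirely into Heinz's theorem.

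Your route via the similarity principle breaks at the representation $\Phi=e^Th$ with $h$ ``component-wise holomorphic''. That formula is the scalar Bers--Vekua similarity principle; it does not transfer to a vector-valued $\Phi$ satisfying only the norm inequality $|\Phi_{\bar w}|\le C|\Phi|$, because the components are coupled through the right-hand side and there is no single scalar factor $e^T$ that renders all components holomorphic simultaneously. What survives in the vector-valued setting is a unique continuation statement (via Carleman estimates or a matrix similarity principle for genuine systems $\Phi_{\bar w}=A\Phi$), and that would indeed give $\Phi\equiv 0$ on the strip---but you would have to invoke or prove such a result, not the scalar one.

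The bootstrap paragraph is also not a proof as it stands. Even granting the strip result, you have not argued that the successive strips exhaust $B$; the width of each new strip depends on the local size of $|\Phi|$, which you do not control uniformly, so the process could stall at some radius $r>0$. The correct formulation is an open--closed argument on the interior of $\{\mathcal G=0\}$, and the closure step again requires a genuine unique-continuation theorem for the vector-valued differential inequality at interior points---precisely the content of Heinz's result that the paper cites. The sharp constant $\sqrt{n-2}/2$ plays no role here; any finite constant in (\ref{4.17}) suffices for the Heinz argument.
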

\begin{proof}
Consider the Grassmann-type vector ${\mathcal G}\in C^2(B,\mathbb R^N)$ from (\ref{4.12}). Because ${\mathcal G}_w\cdot{\mathcal G}_w^t$ vanishes by the above lemma, there hold
\begin{equation*}
  |{\mathcal G}_u|=|{\mathcal G}_v|,\quad
  {\mathcal G}_u\cdot{\mathcal G}_v^t=0
  \quad\mbox{in}\ B.
\end{equation*}
This means that ${\mathcal G}$ is a conformally parametrized solution of
\begin{equation*}
  \Delta{\mathcal G}=-\,\delta{\mathcal G}\quad\mbox{in}\ B,\quad
  {\mathcal G}=0\quad\mbox{on}\ \partial B;
\end{equation*}
see (\ref{4.17}) with ${\mathcal S}=0$. According to the growth condition $|\delta\mathcal G|\le c|\nabla\mathcal G|^2$, the arguments in \cite{Heinz_01} apply: Assume ${\mathcal G}\not\equiv\mbox{const}$ in $B$. Then, the asymptotic expansion stated in the Satz of \cite{Heinz_01} implies that boundary branch points $w_0\in\partial B$ with ${\mathcal G}_u(w_0)={\mathcal G}_v(w_0)=0$ are isolated. But this contradicts our boundary condition ${\mathcal G}|_{\partial B}=0$ from (\ref{4.15}). Thus, ${\mathcal G}(w)\equiv\mbox{const}=0$ and, finally, the definition (\ref{4.1}) implies $T_{\sigma,i}^\vartheta\equiv0$ in $B.$
\end{proof}
\noindent
As an immediate consequence, we obtain the
\begin{corollary}
If the immersion $X\in C^4(B,\mathbb R^{n+2})$ has flat normal bundle ${\mathfrak S}\equiv 0,$ then any critical ONS ${\mathcal N}$ is optimal w.r.t.~${\cal T}_X,$ i.e. ${\cal T}_X({\mathcal N})=0.$
\end{corollary}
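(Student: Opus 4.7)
The plan is to derive the corollary as an essentially immediate consequence of the preceding theorem together with the nonnegative form (\ref{3.3}) of the total torsion. First I would invoke the theorem: since $X$ has flat normal bundle $\mathfrak S \equiv 0$ and $\mathcal N$ is critical, every torsion coefficient satisfies $T_{\sigma,i}^\vartheta \equiv 0$ in $B$ for $i=1,2$ and $\sigma,\vartheta = 1,\ldots,n$.

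Next, I would substitute these vanishing coefficients into the representation
\begin{equation*}
  \mathcal T_X(\mathcal N)
  = 2\sum_{(\sigma,\vartheta)\in U_n}
    \int\hspace*{-0.25cm}\int\limits_{\hspace{-0.3cm}B}
      \Big\{(T_{\sigma,1}^\vartheta)^2 + (T_{\sigma,2}^\vartheta)^2\Big\}\,du\,dv
\end{equation*}
from (\ref{3.3}). Each integrand is identically zero, so $\mathcal T_X(\mathcal N) = 0$.

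Finally, since the right-hand side of (\ref{3.3}) displays $\mathcal T_X$ as a sum of squares, one has $\mathcal T_X(\widetilde{\mathcal N}) \ge 0$ for \emph{any} orthonormal normal section $\widetilde{\mathcal N}$. Thus $\mathcal N$ attains the absolute minimum, i.e.\ $\mathcal N$ is optimal with respect to $\mathcal T_X$. There is essentially no obstacle here beyond the application of the theorem; the only point worth emphasizing is the nonnegativity of $\mathcal T_X$, which in turn uses the conformal parametrization (\ref{2.4}) to reduce the original form of the functional to the manifestly nonnegative expression (\ref{3.3}).
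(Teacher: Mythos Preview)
Your proof is correct and follows exactly the approach the paper intends: the corollary is stated there as ``an immediate consequence'' of the preceding theorem, with no further argument given, and your derivation via the vanishing torsions and the nonnegative form (\ref{3.3}) supplies precisely the obvious missing details.
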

\begin{remark}
The case $n=3$ ($X$ is immersed in $\mathbb R^5$) is covered by Wente's result in \cite{Wente_02}.
\end{remark}
\setcounter{equation}{0}
\section{The case of non-flat normal bundle}
\label{section6}
If the normal bundle has non-vanishing curvature $\mathcal S\not\equiv0$ it is desirable to have both, lower and upper bounds, at least for the total torsion of a critical orthonormal normal section.\\[1ex]
In paragraph 6.1, we will prove such a lower bound for the functional ${\mathcal T}_X.$ In the remaining paragraphs we establish an upper bound for  the torsion coefficients combining a gradient estimate due to E.\,Heinz with H.\,C.\,Wente's $L^\infty$-estimate.
\subsection{A lower bound for the total torsion}
We write $\|Z\|_{p,\varrho}$, $p\in[1,+\infty]$, $\varrho\in[0,1]$, for the $L^p(B_\varrho(0))$-norm of a continuous mapping $Z\colon B_\varrho(0)\to\mathbb R^d,$ $d\in\mathbb N.$ In addition, we abbreviate $\|Z\|_p:=\|Z\|_{p,1}$.
\begin{theorem}
Let $X\in C^4(B,\mathbb R^{n+2}),$ $n\ge 2,$ be an immersion and ${\mathcal N}$ a critical ONS of its normal bundle with curvature ${\mathcal S}\not\equiv 0.$
\begin{itemize}
\item[(I)]
If ${\mathcal S}\not=\mbox{\rm const},$ then it holds
\begin{equation}\label{6.1}
  {\mathcal T}_X({\mathcal N})\ge \bigg(\sqrt{n-2}\,\|\mathcal S\|_\infty+\frac{\|\mathcal S\|_{2}^{2}}{(1-\varrho)^2\|\mathcal S\|_{2,\varrho}^2}
  +\frac{2\|\nabla\mathcal S\|_2^{2}}{\|\mathcal S\|_{2,\varrho}^2}\bigg)^{-1}\|\mathcal S\|_{2,\varrho}^2>0
\end{equation}
with $\varrho=\varrho(\mathcal S)\in(0,1)$ chosen as in (\ref{6.3}).
\item[(II)]
If ${\mathcal S}=\mbox{\rm const}\not=0,$ then we have 
\begin{equation}\label{6.2}
  {\mathcal T}_X({\mathcal N})
  \ge \frac{1}{2}\,\frac{\pi|\mathcal S|^2}{\sqrt{n-2}\,|\mathcal S|+16}.
\end{equation}
\end{itemize}
\end{theorem}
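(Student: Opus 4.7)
The proof hinges on the identity $\mathcal{T}_X(\mathcal{N}) = 2\int_B |\nabla\mathcal{G}|^2\,du\,dv$, which is immediate from the conformal expression (\ref{3.3}) of the torsion functional together with the defining relation (\ref{4.1}). The strategy is to test the PDE (\ref{4.15}) against $\varphi\,\mathcal{S}$ for a suitable scalar cutoff $\varphi$, extract $\|\mathcal{S}\|_{2,\varrho}^2$ on the left-hand side, and control the right-hand side by $\|\nabla\mathcal{G}\|_2$ — equivalently, by $\mathcal{T}_X(\mathcal{N})$ — using the pointwise bound (\ref{4.17}).

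For part (I), I would fix a radial cutoff $\varphi\in C^\infty(B)$ with $\varphi\equiv 1$ on $B_\varrho(0)$, $\varphi|_{\partial B}=0$, $0\leq\varphi\leq 1$, and $|\nabla\varphi|\leq (1-\varrho)^{-1}$. Taking the inner product of (\ref{4.15}) with $\varphi\mathcal{S}$ and integrating yields
\[
\int_B \varphi|\mathcal{S}|^2\,du\,dv = \int_B \varphi\,\mathcal{S}\cdot\Delta\mathcal{G}\,du\,dv + \int_B \varphi\,\mathcal{S}\cdot\delta\mathcal{G}\,du\,dv.
\]
Component-wise integration by parts in the first integral — the boundary contribution vanishes because $\varphi=0$ on $\partial B$ — converts it into two divergence-type summands involving $\nabla\mathcal{S}\cdot\nabla\mathcal{G}$ and $\nabla\varphi\cdot\sum_\alpha\mathcal{S}^\alpha\nabla\mathcal{G}^\alpha$. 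Applying Cauchy-Schwarz componentwise in the index $\alpha$, together with (\ref{4.17}), $\varphi\leq 1$, and $|\nabla\varphi|\leq(1-\varrho)^{-1}$, one arrives at
\[
\|\mathcal{S}\|_{2,\varrho}^2 \leq \Bigl(\tfrac{1}{1-\varrho}\|\mathcal{S}\|_2 + \|\nabla\mathcal{S}\|_2\Bigr)\|\nabla\mathcal{G}\|_2 + \tfrac{\sqrt{n-2}}{2}\|\mathcal{S}\|_\infty\|\nabla\mathcal{G}\|_2^2.
\]
To isolate $\mathcal{T}_X(\mathcal{N})$, I substitute $\|\nabla\mathcal{G}\|_2^2 = \tfrac12\mathcal{T}_X$, move the $\|\mathcal{S}\|_\infty$-term to the left, square the resulting inequality using $(x+y)^2 \leq 2(x^2+y^2)$ on the two mixed terms, drop the non-negative $\mathcal{T}_X^2$-contribution on the left, and finally weaken $\tfrac{\sqrt{n-2}}{2}\|\mathcal{S}\|_\infty$ to $\sqrt{n-2}\|\mathcal{S}\|_\infty$ and $\|\nabla\mathcal{S}\|_2^2$ to $2\|\nabla\mathcal{S}\|_2^2$; this chain of manipulations produces exactly (\ref{6.1}). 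The choice (\ref{6.3}) of $\varrho$ is presumably just to ensure $\|\mathcal{S}\|_{2,\varrho}>0$, so that the estimate is non-vacuous.

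For part (II), the constancy of $\mathcal{S}$ kills $\nabla\mathcal{S}$ and removes one of the two mixed terms in the intermediate inequality. Hence the wasteful step $(x+y)^2\leq 2(x^2+y^2)$ from part (I) is no longer needed, and squaring the remaining inequality directly saves a factor of two. Inserting the explicit values $\|\mathcal{S}\|_2^2 = \pi|\mathcal{S}|^2$, $\|\mathcal{S}\|_{2,\varrho}^2 = \pi\varrho^2|\mathcal{S}|^2$, $\|\mathcal{S}\|_\infty=|\mathcal{S}|$, and optimizing $\varrho^2(1-\varrho)^2$ — maximum $\tfrac{1}{16}$ attained at $\varrho=\tfrac12$ — produces (\ref{6.2}) with the sharp constant $16$. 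The main obstacle is nothing conceptual but the arithmetic bookkeeping in the move-square-bound sequence: to end up with the coefficient $16$ in (\ref{6.2}) and with the claimed coefficients in (\ref{6.1}), one must avoid losing factors of two at every step, which forces the specific order of operations above. The remaining ingredients — cutoff construction, integration by parts, componentwise Cauchy-Schwarz, and the quadratic bound (\ref{4.17}) — are routine.
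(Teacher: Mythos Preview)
Your approach is essentially the same as the paper's: test (\ref{4.15}) against a cutoff times $\mathcal S$, integrate by parts, and control the $\delta\mathcal G$ term via the quadratic bound from (\ref{4.22}). The only methodological difference is cosmetic: the paper applies Young's inequality with free parameters $\varepsilon,\delta$ and then chooses $\varepsilon=\|\mathcal S\|_2^{-2}\|\mathcal S\|_{2,\varrho}^2$, $\delta=\tfrac12\|\nabla\mathcal S\|_2^{-2}\|\mathcal S\|_{2,\varrho}^2$ (resp.\ $\varepsilon=\tfrac14$ in part (II)), whereas you apply Cauchy--Schwarz first and then square. These two manoeuvres are equivalent, and in fact your intermediate bound $\mathcal T_X\ge A/(D+B^2/A+C^2/A)$ is slightly \emph{sharper} than (\ref{6.1}) before you weaken it.

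Two small points worth tightening: (i) when you ``move the $\|\mathcal S\|_\infty$-term to the left and square'', you must remark that if the left side is negative the desired bound holds trivially (since then $\mathcal T_X>2\|\mathcal S\|_{2,\varrho}^2/(\sqrt{n-2}\,\|\mathcal S\|_\infty)$, which already dominates (\ref{6.1})); (ii) the bound you actually need on the nonlinearity is $|\delta\mathcal G|\le\tfrac{\sqrt{n-2}}{2}|\nabla\mathcal G|^2$ from (\ref{4.22}), not (\ref{4.17}), which bounds $|\Delta\mathcal G|$. Also, in part (II) the choice $\varrho=\tfrac12$ is exactly what the paper uses, but it does not literally optimise the full right-hand side --- it only maximises $\varrho^2(1-\varrho)^2$, which suffices to land on the constant $16$. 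None of these affect correctness.
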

\begin{proof}
\begin{itemize}
\item[1.]
We start with (I): Because of ${\mathcal S}\not=\mbox{const},$ there exists $\varrho=\varrho(\mathcal S)\in(0,1)$ such that
\begin{equation}\label{6.3}
  \|\mathcal S\|_{2,\varrho}
  =\left(\ \,
     \int\hspace{-0.25cm}\int\limits_{\hspace{-0.3cm}B_\varrho(0)}|{\mathcal S}|^2\,dudv
   \right)^{\frac12}>0.
\end{equation}
We choose a test function $\eta\in C^0(B,\mathbb R)\cap\mathring{H}_2^1(B,\mathbb R)$ with the properties
\begin{equation}\label{6.4}
  \eta\in[0,1]\quad\mbox{in}\ B,\quad
  \eta=1\quad\mbox{in}\ B_\varrho\,,\quad
  |\nabla\eta|\le\frac{1}{1-\varrho}\quad\mbox{in}\ B.
\end{equation}
Multiplying $\Delta{\mathcal G}=-\delta{\mathcal G}+{\mathcal S}$ from (\ref{4.15}) by $(\eta{\mathcal S})$ and integrating by parts yields
\begin{equation*}
  \int\hspace{-0.25cm}\int\limits_{\hspace{-0.3cm}B}\nabla\mathcal G\cdot\nabla(\eta\mathcal S)^t=\int\hspace{-0.25cm}\int\limits_{\hspace{-0.3cm}B}
  \eta\,\delta\mathcal G\cdot\mathcal S^t-\int\hspace{-0.25cm}\int\limits_{\hspace{-0.3cm}B}\eta\,|\mathcal S|^2
\end{equation*}
(we omit $dudv$). Taking (\ref{4.22}) into account, we can now estimate as follows:
\begin{equation}\label{6.6}
\begin{array}{lll}
  \displaystyle
  \int\hspace{-0.25cm}\int\limits_{\hspace{-0.3cm}B_\varrho}|{\mathcal S}|^2\negthickspace
  & \le & \negthickspace\displaystyle
          \int\hspace{-0.25cm}\int\limits_{\hspace{-0.3cm}B}
          \eta\,|{\mathcal S}|^2
          \,\le\,\int\hspace{-0.25cm}\int\limits_{\hspace{-0.3cm}B}
                \eta\,\big|\delta{\mathcal G}\cdot{\mathcal S}^t\big|
				+\int\hspace{-0.25cm}\int\limits_{\hspace{-0.3cm}B}
               \big|\nabla{\mathcal G}\cdot\nabla(\eta{\mathcal S})^t\big|\\[5ex]
  & \le & \negthickspace\displaystyle
          \|{\mathcal S}\|_\infty
          \int\hspace{-0.25cm}\int\limits_{\hspace{-0.3cm}B}\eta\,|\delta{\mathcal G}|
          +\int\hspace{-0.25cm}\int\limits_{\hspace{-0.3cm}B}
           |\nabla\eta|\,|\mathcal S|\,|\nabla{\mathcal G}|
          +\int\hspace{-0.25cm}\int\limits_{\hspace{-0.3cm}B}
           \eta\,|\nabla\mathcal S|\,|\nabla\mathcal G| \\[5ex]
  & \le & \negthickspace\displaystyle
          \frac{\sqrt{n-2}}{2}\,\|{\mathcal S}\|_\infty
          \int\hspace{-0.25cm}\int\limits_{\hspace{-0.3cm}B}
          |\nabla{\mathcal G}|^2
          +\frac\varepsilon2
           \int\hspace{-0.25cm}\int\limits_{\hspace{-0.3cm}B}
		   |\mathcal S|^2+\frac1{2\varepsilon(1-\varrho)^2}	
		   \int\hspace{-0.25cm}\int\limits_{\hspace{-0.3cm}B}
           |\nabla{\mathcal G}|^2 \\[5ex]
  &&      \negthickspace\displaystyle
          +\frac\delta2
           \int\hspace{-0.25cm}\int\limits_{\hspace{-0.3cm}B}
           |\nabla\mathcal S|^2
          +\frac1{2\delta}
		   \int\hspace{-0.25cm}\int\limits_{\hspace{-0.3cm}B}
		   |\nabla\mathcal G|^2
\end{array}
\end{equation}
with arbitrary numbers $\varepsilon,\delta>0$. Let us write (\ref{6.6}) as
\begin{equation}\label{6.7}
  \|\mathcal S\|_{2,\varrho}^2
  \le\left(
       \frac{\sqrt{n-2}}{2}\,\|\mathcal S\|_\infty
       +\frac{1}{2\varepsilon(1-\varrho)^2}
       +\frac1{2\delta}
      \right)\|\nabla\mathcal G\|_2^2
      +\frac{\varepsilon}{2}\,\|\mathcal S\|_2^2
      +\frac{\delta}{2}\,\|\nabla\mathcal S\|_2^2\,.
\end{equation}
\item[2.]
According to (\ref{6.3}), the choice $\varepsilon=\|\mathcal S\|_2^{-2}\|\mathcal S\|_{2,\varrho}^2>0$ is admissible in (\ref{6.7}), and we infer
\begin{equation}\label{6.8}
  \|\mathcal S\|_{2,\varrho}^2
  \le\left(
       \sqrt{n-2}\,\|\mathcal S\|_\infty
       +\frac{\|\mathcal S\|_{2}^{2}}{(1-\varrho)^2\|\mathcal S\|_{2,\varrho}^2}
       +\frac{1}{\delta}
     \right)\|\nabla\mathcal G\|_2^2
     +\delta\|\nabla\mathcal S\|_2^2\,.
\end{equation}
And since $\mathcal S\not=\mbox{\rm const},$ we can choose $\delta=\frac12\|\nabla\mathcal S\|_2^{-2}\|\mathcal S\|_{2,\varrho}^2$ in (\ref{6.8}), which implies
\begin{equation*}
  \|\mathcal S\|_{2,\varrho}^2
  \le 2\left(
         \sqrt{n-2}\,\|\mathcal S\|_\infty
         +\frac{\|\mathcal S\|_{2}^{2}}{(1-\varrho)^2\|\mathcal S\|_{2,\varrho}^2}
         +\frac{2\|\nabla\mathcal S\|_2^{2}}{\|\mathcal S\|_{2,\varrho}^2}
       \right)\|\nabla\mathcal G\|_2^2\,.
\end{equation*}
Having $\mathcal T_X(\mathcal N)=2\|\nabla\mathcal G\|_2^2$ in mind, we arrive at (\ref{6.1}).
\item[3.]
In the case ${\mathcal S}=\mbox{\rm const}\not=0$ we choose $\varrho=\frac{1}{2}$ in (\ref{6.4}). Starting as in (\ref{6.6}), we then obtain
\begin{equation*}
\begin{array}{lll}
  \displaystyle
  \frac\pi4|\mathcal S|^2\negthickspace
  & = & \negthickspace\displaystyle
          \int\hspace{-0.25cm}\int\limits_{\hspace{-0.3cm}B_{\frac12}}|{\mathcal S}|^2
          \,\le\,|\mathcal S|\int\hspace{-0.25cm}\int\limits_{\hspace{-0.3cm}B}
           |\delta{\mathcal G}|
		  +\int\hspace{-0.25cm}\int\limits_{\hspace{-0.3cm}B}
           |\nabla\eta|\,|\mathcal S|\,|\nabla{\mathcal G}| \\[5ex]
  & \le & \negthickspace\displaystyle
          \frac{\sqrt{n-2}}{2}\,|{\mathcal S}|
          \int\hspace{-0.25cm}\int\limits_{\hspace{-0.3cm}B}
          |\nabla{\mathcal G}|^2
          +\frac\varepsilon2\pi|\mathcal S|^2
           +\frac2\varepsilon	
		   \int\hspace{-0.25cm}\int\limits_{\hspace{-0.3cm}B}
           |\nabla{\mathcal G}|^2\,.
\end{array}
\end{equation*}
With $\varepsilon=\frac14$ it follows that
\begin{equation*}
  \frac{\pi}{8}\,|\mathcal S|^2
  \le\left(
       \frac{\sqrt{n-2}}{2}\,|{\mathcal S}|+8
     \right)\|\nabla\mathcal G\|_2^2\,.
\end{equation*}
This implies the estimate (\ref{6.2}).\vspace*{-3ex}
\end{itemize}
\end{proof}
\begin{remarks}\quad
\begin{itemize}
\item[1.]
For {\it small} solutions ${\mathcal G}$ with $\|\mathcal G\|_\infty<\frac2{\sqrt{n-2}}$ it is quite easy to derive also an upper bound for the total torsion: Multiplying (\ref{4.15}) by ${\mathcal G}$ and integrating by parts yields
  $$\mathcal T_X(\mathcal N)=2\|\nabla\mathcal G\|_2^2\le \frac{4\|\mathcal G\|_\infty\|\mathcal S\|_1}{2-\sqrt{n-2}\,
  \|{\mathcal G}\|_\infty}\,.$$
Such a small solution can be constructed via the arguments in \cite{Sauvigny_02}; see remark 3 in subsection 4.4. Let us emphasize here again that the case $n=2$ is much easier to handle: The classical maximum principle controls $\|g^{(12)}\|_\infty$ by $\|S_{1,12}^2\|_\infty,$ and no smallness condition is needed to bound the total torsion; see subsection 4.3.
\item[2.]
In \cite{Takahashi_01} F.\,Takahashi translated the system (\ref{4.15}) for $n=3$ into a variational problem. Then he was able to derive lower and upper bounds for the quantity $\|\nabla\mathcal G\|_2$ of a minimizer in the corresponding Nehari manifold, whenever $\mathcal S$ is sufficiently small (in the $H^{-1}(B)$-norm); we refer to \cite{Takahashi_01} for the details.
\end{itemize}
\end{remarks}
\subsection{An $L^\infty$-bound for ${\mathcal G}$}
\begin{proposition}
For a critical ONS ${\mathcal N},$ the Grassmann-type vector ${\mathcal G}$ from (\ref{4.12}) satisfies
\begin{equation}\label{6.13}
  \|\mathcal G\|_{\infty}
  \le\frac{n-2}{2\pi}\,\|\nabla \mathcal G\|_2^2+\frac{1}{4}\,\sqrt{\frac{n(n-1)}2}\,\|\mathcal S\|_\infty\,.
\end{equation}
\end{proposition}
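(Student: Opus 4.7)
The plan is to split $\mathcal G$ additively into a linear Poisson piece and a quasilinear Wente piece, estimate each separately, and combine by the triangle inequality. Concretely, I define $\mathcal G_2\in C^2(B,\mathbb R^N)\cap C^0(\overline B,\mathbb R^N)$ as the unique solution of the componentwise Dirichlet problem $\Delta\mathcal G_2=\mathcal S$ in $B$, $\mathcal G_2=0$ on $\partial B$, and set $\mathcal G_1:=\mathcal G-\mathcal G_2$. By (\ref{4.15}), $\mathcal G_1$ then satisfies $\Delta\mathcal G_1=-\delta\mathcal G$ with vanishing boundary data, so that $\|\mathcal G\|_\infty\le\|\mathcal G_1\|_\infty+\|\mathcal G_2\|_\infty$.

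For the Poisson piece, each component $g_2^{(\sigma\vartheta)}$ solves $\Delta g_2^{(\sigma\vartheta)}=S_{\sigma,12}^\vartheta$ with zero boundary. Comparing with the explicit solution $(1-|w|^2)/4$ of $\Delta u=-1$, $u|_{\partial B}=0$, the classical maximum principle gives
\[
|g_2^{(\sigma\vartheta)}(w)|\le\tfrac{1}{4}\|S_{\sigma,12}^\vartheta\|_\infty\le\tfrac{1}{4}\|\mathcal S\|_\infty,
\]
since the Euclidean norm dominates each component. Summing squares over the $N=n(n-1)/2$ index pairs in $U_n$ at a common $w$ yields $|\mathcal G_2(w)|\le\tfrac{1}{4}\sqrt{n(n-1)/2}\,\|\mathcal S\|_\infty$, which is precisely the second term of (\ref{6.13}).

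For the Wente piece I exploit that the diagonal entries $g^{(\sigma\sigma)}$, $g^{(\vartheta\vartheta)}$ vanish, so the sum defining $\delta g^{(\sigma\vartheta)}$ reduces to exactly $n-2$ non-trivial scalar Jacobians, one per $\omega\notin\{\sigma,\vartheta\}$. I decompose $g_1^{(\sigma\vartheta)}=\sum_{\omega\ne\sigma,\vartheta}u_{\sigma\vartheta}^{\omega}$, where $u_{\sigma\vartheta}^\omega$ solves the scalar Wente problem $\Delta u_{\sigma\vartheta}^\omega=-\det(\nabla g^{(\sigma\omega)},\nabla g^{(\omega\vartheta)})$ with zero boundary. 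The sharp Wente--Topping $L^\infty$-estimate on the unit disc (cf.\ \cite{Wente_01},\ \cite{Topping_01}) yields
\[
\|u_{\sigma\vartheta}^\omega\|_\infty\le\tfrac{1}{2\pi}\|\nabla g^{(\sigma\omega)}\|_2\|\nabla g^{(\omega\vartheta)}\|_2\le\tfrac{1}{2\pi}\|\nabla\mathcal G\|_2^2,
\]
each individual gradient being dominated by the total $L^2$-norm. Summing the $n-2$ contributions produces the componentwise bound $\|g_1^{(\sigma\vartheta)}\|_\infty\le\tfrac{n-2}{2\pi}\|\nabla\mathcal G\|_2^2$.

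The genuinely delicate step is upgrading this componentwise bound to the full Euclidean estimate $|\mathcal G_1(w)|\le\tfrac{n-2}{2\pi}\|\nabla\mathcal G\|_2^2$ without picking up an additional factor of $\sqrt N$. I would handle this by choosing the maximum point $w_0$ of $|\mathcal G_1|$ together with a unit vector $\xi\in\mathbb R^N$ realizing $|\mathcal G_1(w_0)|=\xi\cdot\mathcal G_1(w_0)$, and applying the scalar Wente estimate to $\xi\cdot\mathcal G_1$: using the matrix-commutator identity $\delta\mathbf G=[\mathbf G_u,\mathbf G_v]$ together with the skew-symmetry of $\mathbf G$, the right-hand side $\xi\cdot\delta\mathcal G$ can again be regrouped as a sum of exactly $n-2$ Jacobians of suitable linear combinations of the entries $g^{(\cdot\cdot)}$, each bounded by $\tfrac{1}{2\pi}\|\nabla\mathcal G\|_2^2$ via the same sharp Wente constant. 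Adding the resulting bound to the Poisson estimate gives (\ref{6.13}).
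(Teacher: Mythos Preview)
Your Poisson piece is fine and matches the paper. The gap is in the Wente piece, specifically in the ``genuinely delicate step'' you yourself flag.

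You first bound each component by $\|g_1^{(\sigma\vartheta)}\|_\infty\le\tfrac{n-2}{2\pi}\|\nabla\mathcal G\|_2^2$, having already replaced the individual factors $\|\nabla g^{(\sigma\omega)}\|_2\|\nabla g^{(\omega\vartheta)}\|_2$ by the crude upper bound $\|\nabla\mathcal G\|_2^2$. From that point on the componentwise information is gone, and passing to the Euclidean norm would cost you a factor~$\sqrt N$. Your proposed rescue---choosing a unit vector $\xi$ and regrouping $\xi\cdot\delta\mathcal G$ into exactly $n-2$ Jacobians of linear combinations whose gradients are still controlled by $\|\nabla\mathcal G\|_2$---is not substantiated. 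The commutator identity $\delta\mathbf G=-[\mathbf G_u,\mathbf G_v]$ gives $\xi\cdot\delta\mathcal G=-\tfrac12\,\mathrm{tr}\big(\Xi[\mathbf G_u,\mathbf G_v]\big)$ with $\Xi\in so(n)$, but nothing in this expression forces a decomposition into $n-2$ Jacobians with the required gradient bounds. (For $n=3$ one can rotate the target so that $\xi=e_1$, reducing to a single Jacobian, but for $n\ge4$ no such reduction is evident.)

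The paper avoids this difficulty altogether by \emph{not} throwing away the individual gradient norms. It uses the crude vector inequality $|\mathcal G_1(w)|\le\sum_{\sigma<\vartheta}\sum_{\omega\notin\{\sigma,\vartheta\}}|y^{(\sigma\vartheta\omega)}(w)|$ and then applies Wente's estimate in the form
\[
\|y^{(\sigma\vartheta\omega)}\|_\infty\le\tfrac1{4\pi}\big(\|\nabla g^{(\sigma\omega)}\|_2^2+\|\nabla g^{(\omega\vartheta)}\|_2^2\big),
\]
retaining the specific pair of gradients. A direct index count shows that in the triple sum each $\|\nabla g^{(\alpha\beta)}\|_2^2$ (with $\alpha<\beta$) occurs exactly $2(n-2)$ times, so the total is $\tfrac{2(n-2)}{4\pi}\|\nabla\mathcal G\|_2^2=\tfrac{n-2}{2\pi}\|\nabla\mathcal G\|_2^2$. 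The constant comes out right precisely because the losses from the $\ell^2\le\ell^1$ step and from summing over triples are exactly compensated by the combinatorics of the index sum; no regrouping or directional argument is needed.
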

\begin{proof}
\begin{itemize}
\item[1.]
For $1\le \sigma<\vartheta\le n$ and $\omega\in\{1,\ldots,n\}$ with $\omega\not\in\{\sigma,\vartheta\}$, we define the functions $y^{(\sigma\vartheta\omega)}$ as the unique solutions of
\begin{equation*}
  \Delta y^{(\sigma\vartheta\omega)}=-\det\big(\nabla g^{(\sigma\omega)},\nabla g^{(\omega\vartheta)}\big)\quad\mbox{in}\ B,\quad
  y^{(\sigma\vartheta\omega)}=0\quad\mbox{on}\ \partial B.
\end{equation*}
Wente's $L^\infty$-estimate (compare e.g.~\cite{Wente_01}, \cite{Topping_01}) then yields the \emph{optimal} inequalities
\begin{equation}\label{6.15}
  \|y^{(\sigma\vartheta\omega)}\|_\infty\le\frac 1{4\pi}\Big(\|\nabla g^{(\sigma\omega)}\|_2^2+\|\nabla g^{(\omega\vartheta)}\|_2^2\Big),
  \quad(\sigma,\vartheta)\in U_n,\quad\omega\not\in\{\sigma,\vartheta\}.
\end{equation}
In addition, we introduce the Grassmann-type vector $\mathcal Z=(z^{(\sigma\vartheta)})_{1\le\sigma<\vartheta\le n}$ as the unique solution of
\begin{equation*}
  \Delta \mathcal Z=\mathcal S\quad\mbox{in}\ B,\quad
  \mathcal Z=0\quad\mbox{on}\ \partial B.
\end{equation*}
We use Poisson's representation formula and estimate as follows:
\begin{equation}\label{6.17}
\begin{array}{lll}
  \displaystyle
  |\mathcal Z(w)|\negthickspace
  &  =  & \negthickspace\displaystyle
          \bigg|
            \int\hspace{-0.25cm}\int\limits_{\hspace{-0.3cm}B}
              \phi(\zeta;w)\mathcal S(\zeta)\,d\xi d\eta
          \bigg|
          \,\le\,\sqrt N
                 \int\hspace{-0.25cm}\int\limits_{\hspace{-0.3cm}B}
                 |\phi(\zeta;w)||\mathcal S(\zeta)|\,d\xi d\eta \\[4ex]
  & \le & \negthickspace\displaystyle
          \sqrt N\,\|{\mathcal S}\|_\infty
          \int\hspace{-0.25cm}\int\limits_{\hspace{-0.3cm}B}
          |\phi(\zeta;w)|\,d\xi d\eta
\end{array}
\end{equation}
with the non-positive Green's function 
\begin{equation}\label{6.18}
  \phi(\zeta;w):=\frac1{2\pi}\log\Big|\frac{\zeta-w}{1-\overline w\zeta}\Big|,\quad\zeta\not= w,
\end{equation}
for $\Delta$ in $B;$ $\zeta=(\xi,\eta)$.
Because $\psi(w)=\frac{|w|^2-1}{4}$ solves $\Delta\psi=1$ in $B,$ $\psi=0$ on $\partial B,$ Poisson's formula yields
\begin{equation*}
  \int\hspace{-0.25cm}\int\limits_{\hspace{-0.3cm}B}
  |\phi(\zeta;w)|\,d\xi d\eta
  =\frac{1-|w|^2}{4}
  \le\frac{1}{4}\,,
\end{equation*}
which enables us to continue (\ref{6.17}) to get
\begin{equation}\label{6.19}
  \|\mathcal Z\|_\infty\le\frac{\sqrt N}4\|\mathcal S\|_\infty\,.
\end{equation}
\item[2.]
Next, we note the identity 
\begin{equation*}
  g^{(\sigma\vartheta)}=\sum_{\omega\not\in\{\sigma,\vartheta\}}y^{(\sigma\vartheta\omega)}+z^{(\sigma\vartheta)},\quad(\sigma,\vartheta)\in U_n.
\end{equation*}
Applying now the estimates (\ref{6.15}) and (\ref{6.19}), we arrive at
\begin{equation*}
\begin{array}{rcl}
  \|\mathcal G\|_\infty\negthickspace
  & \le & \negthickspace\displaystyle
          \sum_{\sigma<\vartheta}
          \sum_{\omega\not\in\{\sigma,\vartheta\}}
          \|y^{(\sigma\vartheta\omega)}\|_\infty+\|\mathcal Z\|_\infty\\[4ex]
  & \le & \negthickspace\displaystyle
          \frac1{4\pi}
          \sum_{\sigma<\vartheta}
          \sum_{\omega\not\in\{\sigma,\vartheta\}}
          \Big(
            \|\nabla g^{(\sigma\omega)}\|_2^2+\|\nabla g^{(\omega\vartheta)}\|_2^2
          \Big)
          +\frac{\sqrt N}{4}\,\|\mathcal S\|_\infty\\[4ex]
  & = & \negthickspace\displaystyle
          \frac1{4\pi}\,
          \bigg\{
            \sum_{\omega<\sigma<\vartheta}
            \Big(
              \|\nabla g^{(\omega\sigma)}\|_2^2+\|\nabla g^{(\omega\vartheta)}\|_2^2
            \Big)
            +\sum_{\sigma<\omega<\vartheta}
             \Big(
               \|\nabla g^{(\sigma\omega)}\|_2^2+\|\nabla g^{(\omega\vartheta)}\|_2^2
             \Big)\\[4ex]
  &     & \negthickspace\displaystyle
          \hspace*{5.2ex}
            +\sum_{\sigma<\vartheta<\omega}
             \Big(
               \|\nabla g^{(\sigma\omega)}\|_2^2+\|\nabla g^{(\vartheta\omega)}\|_2^2
             \Big)
          \bigg\}
          +\frac{\sqrt N}{4}\,\|\mathcal S\|_\infty\\[4ex]
  &  =  & \negthickspace\displaystyle
          \frac1{4\pi}\,
          \bigg\{
            \sum_{\sigma<\vartheta<\omega}
            \|\nabla g^{(\sigma\vartheta)}\|_2^2
            +\sum_{\sigma<\omega<\vartheta}
             \|\nabla g^{(\sigma\vartheta)}\|_2^2
            +\sum_{\sigma<\vartheta<\omega}
             \|\nabla g^{(\sigma\vartheta)}\|_2^2\\[4ex]
  &     & \negthickspace\displaystyle
          \hspace*{5.2ex}
            +\!\sum_{\omega<\sigma<\vartheta}\!
             \|\nabla g^{(\sigma\vartheta)}\|_2^2
            +\!\sum_{\sigma<\omega<\vartheta}\!
             \|\nabla g^{(\sigma\vartheta)}\|_2^2
            +\!\sum_{\omega<\sigma<\vartheta}\!
             \|\nabla g^{(\sigma\vartheta)}\|_2^2
          \bigg\}
          +\frac{\sqrt N}{4}\,\|\mathcal S\|_\infty\\[4ex]
  &  =  & \negthickspace\displaystyle
          \frac1{2\pi}
          \sum_{\sigma<\vartheta}
          \sum_{\omega\not\in\{\sigma,\vartheta\}}
          \|\nabla g^{(\sigma\vartheta)}\|_2^2
          +\frac{\sqrt N}{4}\,\|\mathcal S\|_\infty\\[4ex]
  &  =  & \negthickspace\displaystyle
          \frac{n-2}{2\pi}\,\|\nabla\mathcal G\|_2^2
          +\frac{1}{4}\,\sqrt{\frac{n(n-1)}2}\,\|\mathcal S\|_\infty,
\end{array}
\end{equation*}
as asserted.
\end{itemize}
\end{proof}
\subsection{An alternative estimate for $\|{\mathcal G}\|_\infty$}
For large codimension $n$, the estimate (\ref{6.13}) is somewhat unsatisfactory. Alternatively, we will show the inequality
\begin{equation}\label{6.22}
  |z^{(\sigma\vartheta)}(w)|\le\sqrt{\frac{2}{\pi}}\,\|S_{\sigma,12}^\vartheta\|_2
  \quad\mbox{in}\ B\quad\mbox{for all}\ (\sigma,\vartheta)\in U_n
\end{equation}
in the present paragraph.
\goodbreak\noindent
Then we calculate
\begin{equation*}
  \|\mathcal Z\|_\infty
  =\sup_B\sqrt{\sum_{\sigma<\vartheta}|z^{(\sigma\vartheta)}(w)|^2}
  \le\sqrt{\frac2\pi}\,\sqrt{\sum_{\sigma<\vartheta}\|S_{\sigma,12}^\vartheta\|_2^2}
  =\sqrt{\frac2\pi}\,\|\mathcal S\|_2
  \le\sqrt2\,\|\mathcal S\|_\infty\,,
\end{equation*}
and this estimate instead of (\ref{6.19}) will lead us to a smaller upper bound for $\|{\mathcal G}\|_\infty$ at least for large codimensions $n.$\\[1ex]
In order to prove (\ref{6.22}), we use the H\"older and the Sobolev inequality and compute
\begin{equation}\label{6.24}
  |z^{(\sigma\vartheta)}(w)|
  \le\|\phi(\cdot\,;w)\|_2\|S_{\sigma,12}^\vartheta\|_2
  \le\frac1{2\sqrt\pi}\|\nabla_\zeta\phi(\cdot\,;w)\|_1\|S_{\sigma,12}^\vartheta\|_2\,.
\end{equation}
For the optimal constant $\frac1{2\sqrt\pi}$ in the Sobolev inequality we refer to \cite{Gilbarg_Trudinger_83} section 7.7 and the references therein. In (\ref{6.24}), $\phi=\phi(\zeta;w)$ denotes again Green's function (\ref{6.18}) for $\Delta$ in $B,$ which satisfies $\phi(\cdot\,;w)\in\mathring{H}^1_1(B)$ for any $w\in\mathring{B}$ as well as
\begin{equation*}
  \phi_\zeta(\zeta;w)
  \equiv\phi_\xi(\zeta;w)-i\phi_\eta(\zeta;w)
  =\frac{1}{2\pi}\,
   \overline{
     \left(
       \frac{\zeta-w}{|\zeta-w|^2}+w\frac{1-\overline w\zeta}{|1-\overline w\zeta|^2}
       \right)},
  \quad w\not=\zeta.
\end{equation*}
A straightforward calculation shows
\begin{equation*}
  |\nabla_\zeta\phi(\zeta;w)|
  \equiv|\phi_\zeta(\zeta;w)|
  =\frac1{2\pi}\frac{1-|w|^2}{|\zeta-w|\,|1-\overline w\zeta|}\le\frac1{2\pi}\frac{1+|w|}{|\zeta-w|}
  \le\frac1\pi\frac1{|\zeta-w|},\quad\zeta\not=w.
\end{equation*}
And since the right hand side in the inequality
\begin{equation*}
  \int\hspace{-0.25cm}\int\limits_{\hspace{-0.3cm}B}
  |\nabla_\zeta\phi(\zeta;w)|\,d\xi d\eta
  \le\frac{1}{\pi}\ \,
     \int\hspace*{-0.25cm}\int\limits_{\hspace*{-0.4cm}B_\delta(w)}
     \frac1{|\zeta-w|}\,d\xi\,d\eta
  +\frac{1}{\pi}\ \ \,
   \int\hspace*{-0.4cm}\int\limits_{\hspace*{-0.45cm}B\setminus B_\delta(w)}
   \frac1{|\zeta-w|}\,d\xi\,d\eta\le 2\delta+\frac1\delta
\end{equation*}
becomes minimal for $\delta=\frac1{\sqrt2}$, we arrive at (\ref{6.22}). Instead of (\ref{6.13}), we thus have the
\begin{proposition}
For a critical ONS ${\mathcal N},$ the Grassmann-type vector ${\mathcal G}$ from (\ref{4.15}) satisfies
\begin{equation}\label{6.28}
  \|\mathcal G\|_{\infty}\le\frac{n-2}{2\pi}\|\nabla \mathcal G\|_2^2+\sqrt2\,\|\mathcal S\|_\infty.
\end{equation}
\end{proposition}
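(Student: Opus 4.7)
The plan is to follow the same blueprint as the proof of the previous proposition, but replace the crude bound $\|\mathcal Z\|_\infty\le\tfrac{\sqrt N}{4}\|\mathcal S\|_\infty$ from (\ref{6.19}) with the sharper component-wise bound (\ref{6.22}). Since the Wente-type inequalities (\ref{6.15}) for the $y^{(\sigma\vartheta\omega)}$ are unaffected by this change, only the $\mathcal Z$-contribution in the final summation needs to be reworked.

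The first step is to establish (\ref{6.22}). I would write $z^{(\sigma\vartheta)}(w)$ via Poisson's representation against Green's function $\phi(\zeta;w)$ from (\ref{6.18}), apply H\"older in $L^2(B)$ to separate $\phi(\cdot;w)$ from $S_{\sigma,12}^\vartheta$, and then exploit that $\phi(\cdot;w)\in\mathring H^1_1(B)$ so that the sharp Sobolev inequality with constant $\tfrac{1}{2\sqrt\pi}$ (see \cite{Gilbarg_Trudinger_83}) converts $\|\phi(\cdot;w)\|_2$ into $\tfrac{1}{2\sqrt\pi}\|\nabla_\zeta\phi(\cdot;w)\|_1$. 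The explicit formula for $\phi_\zeta$ gives the pointwise bound $|\nabla_\zeta\phi(\zeta;w)|\le\tfrac{1}{\pi|\zeta-w|}$, and splitting $B$ into $B_\delta(w)$ and its complement followed by optimization in $\delta=1/\sqrt2$ yields $\|\nabla_\zeta\phi(\cdot;w)\|_1\le 2$. Chaining these bounds produces exactly (\ref{6.22}).

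The second step is the vector assembly: summing (\ref{6.22}) in $\ell^2$ over the pairs $(\sigma,\vartheta)\in U_n$ gives
\begin{equation*}
  \|\mathcal Z\|_\infty\le\sqrt{\tfrac{2}{\pi}}\,\|\mathcal S\|_2\le\sqrt{2}\,\|\mathcal S\|_\infty,
\end{equation*}
where in the last step one uses $|B|=\pi$. This replaces (\ref{6.19}) and is strictly better than the bound $\tfrac{\sqrt N}{4}\|\mathcal S\|_\infty$ as soon as $N=\tfrac12 n(n-1)$ is large, i.e.\ for $n\ge 5$.

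The third step is to repeat the splitting $g^{(\sigma\vartheta)}=\sum_{\omega\notin\{\sigma,\vartheta\}}y^{(\sigma\vartheta\omega)}+z^{(\sigma\vartheta)}$ and the double-sum rearrangement done in the previous proof. The $y^{(\sigma\vartheta\omega)}$-block produces, exactly as before, the term $\tfrac{n-2}{2\pi}\|\nabla\mathcal G\|_2^2$; adding the new $\mathcal Z$-contribution $\sqrt{2}\|\mathcal S\|_\infty$ gives (\ref{6.28}). The main obstacle is really step one, since getting the constant $\sqrt{2/\pi}$ requires handling Green's function $\phi$ in $L^2$ without losing the optimal Sobolev constant and choosing the splitting radius $\delta$ carefully; the combinatorial bookkeeping in step three is verbatim the argument already carried out for (\ref{6.13}).
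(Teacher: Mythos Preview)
Your approach is essentially identical to the paper's: derive (\ref{6.22}) via H\"older plus the sharp Sobolev constant and an explicit $L^1$-bound on $\nabla_\zeta\phi$, assemble in $\ell^2$ to obtain $\|\mathcal Z\|_\infty\le\sqrt{2}\,\|\mathcal S\|_\infty$, and then rerun the combinatorics from the proof of (\ref{6.13}). Two minor slips to fix: at $\delta=1/\sqrt{2}$ the splitting gives $\|\nabla_\zeta\phi(\cdot;w)\|_1\le 2\delta+1/\delta=2\sqrt{2}$ (not $2$), and this factor is what actually produces the constant $\sqrt{2/\pi}$ in (\ref{6.22}); and the alternative bound (\ref{6.28}) improves on (\ref{6.13}) only for $n\ge 9$, since $\sqrt{2}<\tfrac14\sqrt{n(n-1)/2}$ requires $n(n-1)>64$.
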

\noindent
Note that (\ref{6.28}) provides a better bound than (\ref{6.13}) only in the case $n\ge 9.$
\subsection{A pointwise upper bound for the torsion coefficients}
We are now in the position to prove our third main result for immersions with non-flat normal bundle:
\begin{theorem}
Let $X\in C^4(B,\mathbb R^{n+2})$, $n\ge3$, be an immersion and $\mathcal N$ a critical ONS of its normal bundle. Assume that the smallness condition
\begin{equation}\label{6.29}
  \frac{\sqrt{n-2}}2
  \left(
    \frac{n-2}{4\pi}\,\mathcal T_X(\mathcal N)+\gamma(n)\|\mathcal S\|_\infty
  \right)<1
\end{equation}
is satisfied with $\gamma(n):=\min\{\frac14\sqrt{\frac{n(n-1)}2},\sqrt2\}$. Then, the torsion coefficients of ${\mathcal N}$ can be estimated by means of
\begin{equation}\label{6.30}
  \|T_{\sigma,i}^\vartheta\|_\infty\le c,\quad i=1,2,\ (\sigma,\vartheta)\in U_n,
\end{equation}
with a nonnegative constant $c=c(n,\|\mathcal S\|_\infty,\mathcal T_X(\mathcal N))<+\infty$.
\end{theorem}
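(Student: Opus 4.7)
The plan is to reduce the pointwise estimate on the torsion coefficients to a gradient bound on the Grassmann-type vector $\mathcal{G}$, and then to invoke an $L^\infty$-gradient estimate of Heinz--Sauvigny type for the quasilinear system (\ref{4.15}) satisfied by $\mathcal{G}$, the applicability of which is guaranteed precisely by the smallness hypothesis (\ref{6.29}).

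First, I would exploit the identity $\nabla g^{(\sigma\vartheta)}=(-T_{\sigma,2}^\vartheta,T_{\sigma,1}^\vartheta)$ from (\ref{4.1}). It gives the pointwise inequality $|T_{\sigma,i}^\vartheta|\le |\nabla g^{(\sigma\vartheta)}|\le |\nabla \mathcal G|$ for $i=1,2$ and all $(\sigma,\vartheta)\in U_n$. Consequently, (\ref{6.30}) will follow as soon as we prove a uniform bound $\|\nabla\mathcal G\|_\infty\le c(n,\|\mathcal S\|_\infty,\mathcal T_X(\mathcal N))$ in $B$.

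Next, I would recast the smallness condition (\ref{6.29}) as the classical ``smallness of the solution'' required by Heinz-type gradient estimates. Combining the two propositions of paragraphs 6.2 and 6.3 and recalling from (\ref{3.3}) that $\|\nabla\mathcal G\|_2^2=\tfrac12 \mathcal T_X(\mathcal N)$, one obtains
\begin{equation*}
  \|\mathcal G\|_\infty
  \le\frac{n-2}{4\pi}\,\mathcal T_X(\mathcal N)
    +\gamma(n)\,\|\mathcal S\|_\infty
\end{equation*}
with $\gamma(n)=\min\{\tfrac14\sqrt{n(n-1)/2},\sqrt2\}$. Multiplying by $\tfrac{\sqrt{n-2}}{2}$ and invoking (\ref{6.29}) shows that
\begin{equation*}
  \kappa:=\frac{\sqrt{n-2}}{2}\,\|\mathcal G\|_\infty<1.
\end{equation*}
This is the quantity that controls the nonlinearity in (\ref{4.17}).

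The core step is then to apply an $L^\infty$-gradient estimate for solutions $\mathcal G\in C^2(B,\mathbb R^N)\cap C^0(\overline B,\mathbb R^N)$ of the inhomogeneous quasilinear elliptic system
\begin{equation*}
  |\Delta\mathcal G|\le a\,|\nabla\mathcal G|^2+b\quad\text{in }B,
  \qquad \mathcal G=0\quad\text{on }\partial B,
\end{equation*}
with $a=\tfrac{\sqrt{n-2}}{2}$ and $b=\|\mathcal S\|_\infty$, under the smallness hypothesis $a\|\mathcal G\|_\infty=\kappa<1$. Such estimates originate with E.~Heinz (see \cite{Heinz_03}, \cite{Heinz_01}) and have been extended to $n$-vector valued solutions by F.~Sauvigny \cite{Sauvigny_02}; they yield interior bounds via the standard maximum principle trick of testing with an auxiliary function of the form $\psi=|\nabla\mathcal G|^2\,\Phi(\mathcal G)$ for a suitable convex $\Phi$, in which the condition $\kappa<1$ is exactly what ensures that the resulting differential inequality is still of the correct sign. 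Boundary regularity up to $\partial B$, needed to upgrade the interior estimate to the full $L^\infty(B)$ bound, is then a consequence of the smooth Dirichlet data $\mathcal G|_{\partial B}=0$ on $\partial B\in C^\infty$, via the boundary version of the same Heinz/Sauvigny gradient estimates (barrier arguments combined with the interior bound). Putting these ingredients together gives $\|\nabla\mathcal G\|_\infty\le c$ with $c=c(n,\|\mathcal S\|_\infty,\mathcal T_X(\mathcal N))$, and by the first observation this finishes the proof.

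The main obstacle is the third step: one has to appeal to (and in particular, to check the applicability of) Heinz--Sauvigny type gradient estimates for systems with quadratic growth on a domain with boundary, under the precise smallness constant $a\|\mathcal G\|_\infty<1$. The delicate point is that the constant $a=\tfrac{\sqrt{n-2}}{2}$ coming from the sharp form (\ref{4.17}) has to match the smallness threshold appearing in those estimates; otherwise the hypothesis (\ref{6.29}) would not suffice. All other ingredients (the reduction to $\nabla\mathcal G$, the combination of the $L^\infty$-bounds on $\mathcal G$, and the boundary regularity) are essentially bookkeeping.
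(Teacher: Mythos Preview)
Your proposal is correct and follows essentially the same route as the paper: reduce (\ref{6.30}) to a bound on $\|\nabla\mathcal G\|_\infty$ via (\ref{4.1}), use (\ref{6.13})/(\ref{6.28}) together with $\mathcal T_X(\mathcal N)=2\|\nabla\mathcal G\|_2^2$ to convert (\ref{6.29}) into $a\|\mathcal G\|_\infty<1$ with $a=\tfrac{\sqrt{n-2}}{2}$, and then invoke Heinz's global gradient estimate (as presented in \cite{Sauvigny_02}, Chap.~XII, \S3) for the system $|\Delta\mathcal G|\le a|\nabla\mathcal G|^2+b$ with zero Dirichlet data. The only difference is cosmetic: you sketch the mechanism behind the Heinz--Sauvigny estimate (auxiliary function, boundary barriers), whereas the paper simply cites the theorem.
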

\begin{remark}
For codimension $n=2$, the estimate (\ref{6.30}) can be proved with $c=c(\|\mathcal S\|_\infty)$ and without presuming a smallness condition (\ref{6.29}), as already indicated in subsection 4.3. Again we refer to \cite{Froehlich_Mueller_01} for a slight generalization of that result.

\noindent For $n\ge 3$, it remains open if it is possible to prove global pointwise estimates for the torsion coefficients without the smallness condition (\ref{6.29}) and without the knowledge of ${\mathcal T}_X$.
\end{remark}
\begin{proof}[Proof of the theorem]
According to (\ref{4.17}), (\ref{6.13}) resp.~(\ref{6.28}), the Grassmann-type vector $\mathcal G=(g^{(\sigma\vartheta)})_{\sigma<\vartheta}$ solves the system
\begin{equation*}
\begin{array}{l}
  |\Delta\mathcal G|\le a|\nabla\mathcal G|^2+b\quad\mbox{in}\ B,\\[1ex]
  \mathcal G=0\quad\mbox{on}\ \partial B,\\[1ex]
  \|\mathcal G\|_\infty\le M,
\end{array}
\end{equation*}
where the appearing constants are defined by
\begin{equation*}
  a:=\frac{\sqrt{n-2}}2,\quad b:=\|\mathcal S\|_\infty,\quad M:=\frac{n-2}{2\pi}\|\nabla\mathcal G\|_2^2+\gamma(n)\|\mathcal S\|_\infty.
\end{equation*}
The smallness condition (\ref{6.29}) assures $aM<1$ due to $\mathcal T_X(\mathcal N)=2\|\nabla\mathcal G\|_2^2$. Consequently, we can apply E.\,Heinz's global gradient estimate Theorem\,1 in \cite{Sauvigny_02} Chap.~XII, \S\,3, obtaining $\|\nabla\mathcal G\|_\infty\le c$. This in turn yields the desired estimate (\ref{6.30}), according to (\ref{4.1}) and (\ref{4.12}).
\end{proof}
\vspace*{6ex}
{\small

}

\vspace*{10ex}
\noindent
Steffen Fr\"ohlich\\
Freie Universit\"at Berlin\\ 
Fachbereich Mathematik und Informatik\\ 
Arnimallee 2-6\\
D-14195 Berlin\\
Germany\\
e-mail: sfroehli@mi.fu-berlin.de\\[3ex]
Frank M\"uller\\
Brandenburgische Technische Universit\"at Cottbus\\
Mathematisches Institut\\
Konrad-Zuse-Stra{\ss}e 1\\
D-03044 Cottbus\\
Germany\\
e-mail: mueller@math.tu-cottbus.de


\begin{thebibliography}{1}
\bibitem{Bergner_Froehlich_01}
{\sc Bergner, M.; Fr\"ohlich, S.:}
{\it On two-dimensional immersions of prescribed mean curvature in $\mathbb R^n.$} To appear in Zeitschrift Anal. Anw.
\vspace*{-1.2ex}
\bibitem{doCarmo_01}
{\sc do Carmo, M.P.:} {\it Riemannian geometry.} Birkh\"auser-Verlag, 1992.
\vspace*{-1.2ex}
\bibitem{Faessler_01}
{\sc F\"assler, W.:} {\it \"Uber die Normaltorsion von Fl\"achen im vierdimensionalen euklidischen Raum.} Comm. Math. Helv. {\bf 33}, 89--108, No. 1, 1959.
\vspace*{-1.2ex}
\bibitem{Ferapontov_01}
{\sc Ferapontov, E.V.:} {\it Surfaces with flat normal bundle: An explicit construction.} Differential geometry and its Applications {\bf 14,} No. 1, 15--37, 2001.
\vspace*{-1.2ex}
\bibitem{Froehlich_01}
{\sc Fr\"ohlich, S.:} {\it $\mu$-stability of $2$-immersions of prescribed mean curvature and flat normal bundle in Euclidean spaces of higher codimension.} arXiv:math/0701604v1, submitted.
\vspace*{-1.2ex}
\bibitem{Froehlich_Mueller_01}
{\sc Fr\"ohlich, S.; M\"uller, F.:} {\it On critical normal sections for two-dimensional immersions in $\mathbb R^4$ and a Riemann-Hilbert problem.} To appear in Differ. Geom. Appl.
\vspace*{-1.2ex}
\bibitem{Froehlich_Winklmann_01}
{\sc Fr\"ohlich, S.; Winklmann, S.:} {\it Curvature estimates for graphs with prescribed mean curva\-ture and flat normal bundle.} arXiv:math/0603659v1, to appear in Manuscr.\,Math.
\vspace*{-1.2ex}
\bibitem{Gilbarg_Trudinger_83}
{\sc Gilbarg, D.; Trudinger, N.S.:} {\it Elliptic Partial Differential Equations of Second Order}. Springer-Verlag, Berlin Heidelberg New\,York, 1983.
\vspace*{-1.2ex}
\bibitem{Heil_01}
{\sc Heil, E.:} {\it Differentialformen und Anwendungen auf Vektoranalysis, Differentialgleichungen, Geo\-metrie.} Bibliogr. Inst., 1974.
\vspace*{-1.2ex}
\bibitem{Heinz_02}
{\sc Heinz, E.:} {\it \"Uber die Existenz einer Fl\"ache konstanter mittlerer Kr\"ummung bei vorgegebener Berandung.} Math.~Ann.~{\bf 127}, 258--287, 1954.
\vspace*{-1.2ex}
\bibitem{Heinz_03}
{\sc Heinz, E.:} {\it On certain nonlinear elliptic differential equations and univalent mappings.} J.~Anal. Math.~{\bf 5}, 197--272, 1957.
\vspace*{-1.2ex}
\bibitem{Heinz_01}
{\sc Heinz, E.:} {\it \"Uber das Randverhalten quasilinearer elliptischer Systeme mit isothermen Parametern.} Math.~Z.~{\bf 113}, 99--105, 1970.
\vspace*{-1.2ex}
\bibitem{Sauvigny_01}
{\sc Sauvigny, F.:} {\it Introduction of isothermal parameters into a Riemannian metric by the continuity method.} Analysis {\bf 19}, No. 3, 235--243, 1999.
\vspace*{-1.2ex}
\bibitem{Sauvigny_02}
{\sc Sauvigny, F.:} {\it Partial differential equations 1, 2.} Springer-Verlag, Berlin Heidelberg, 2006.
\vspace*{-1.2ex}
\bibitem{Smoczyk_Wang_Xin_01}
{\sc Smoczyk, K.; Wang, G.; Xin, Y.L.:} {\it Bernstein type theorems with flat normal bundle.} Calc.~Var.~{\bf 26}, No. 1, 2006.
\vspace*{-1.2ex}
\bibitem{Takahashi_01}
{\sc Takahashi, F.:} {\it Multiple solutions of inhomogeneous H-systems with zero Dirichlet boundary conditions.} Nonlinear Anal. {\bf 52,} No. 1, 239--259, 2003.
\vspace*{-1.2ex}
\bibitem{Topping_01}
{\sc Topping, P.:} {\it The optimal constant in Wente's $L^\infty$-estimate}. Comment.~Math.~Helv.~{\bf 72}, 316--328, 1997.
\vspace*{-1.2ex}
\bibitem{Wente_02}
{\sc Wente, H.\,C.:} {\it The differential equation $\Delta x=2Hx_u\wedge x_v$ with vanishing boundary values.} Proc.~Amer.~Math.~Soc.~{\bf 50}, 131-137, 1975.
\vspace*{-1.2ex}
\bibitem{Wente_01}
{\sc Wente, H.\,C.:} {\it Large solutions to the volume constrained Plateau problem.} Arch.~Rat. Mech. Anal.~{\bf 75}, 59--77, 1980.
\end{thebibliography}
\end{document}